\documentclass[11pt]{amsart}
\usepackage[top=1.5in, bottom=1.5in, left=1.5in, right=1.5in]{geometry}

\usepackage[all]{xy}
\usepackage[dvipsnames]{xcolor} 
\usepackage{graphicx} 
\usepackage{tikz}
\usetikzlibrary{patterns}
\usetikzlibrary{decorations.pathreplacing}
\usetikzlibrary{arrows}
\usetikzlibrary{decorations.markings}

\usepackage{subcaption}
\usepackage{amsmath}
\usepackage{amssymb}
\usepackage{enumerate}
\usepackage{mathdots}
\usepackage{mathtools}
\usepackage{diagbox}

\usepackage{hyperref}
\usepackage{amsthm}
\usepackage[capitalize,noabbrev]{cleveref}

\allowdisplaybreaks

\numberwithin{equation}{section}

\newtheorem{thm}{Theorem}[section]
\newtheorem{lemma}[thm]{Lemma}
\newtheorem{cor}[thm]{Corollary}
\newtheorem{prop}[thm]{Proposition}
\newtheorem{conj}[thm]{Conjecture}

\newtheorem{Definition}[thm]{Definition}

\newtheorem{Example}[thm]{Example}

\newtheorem{Remark}[thm]{Remark}
\newenvironment{remark}
  {\begin{Remark}\rm}{\hfill$\lozenge$\end{Remark}}

\usepackage{verbatim}

\newcommand{\abs}[1]{\left|#1\right|}

\DeclareMathOperator{\rev}{\mathbf{r}}
\DeclareMathOperator{\comp}{\mathbf{c}}
\DeclareMathOperator{\ASR}{ASR}
\DeclareMathOperator{\ASM}{ASM}

\newcommand{\Sch}[1]{\mathrm{Sch}_{#1}}
\newcommand{\s}[4]{S^{#1}_{{#2},{#3},{#4}}}
\newcommand{\asm}[2]{\ASM_{#1}^{#2}}
\newcommand{\asr}[2]{\ASR_{#1}^{#2}}

\newcommand{\emailhref}[1]{\email{\href{#1}{#1}}}
\title[Extended avoidance]{Pattern avoidance in alternating sign rectangles~I: Extended avoidance}

\author[H. H\"ongesberg]{Hans H\"ongesberg}
\address{Fakultät für Mathematik, Universität Wien, Vienna, Austria}
\emailhref{hans.hoengesberg@univie.ac.at}
\thanks{H.H\"ongesberg acknowledges support from the Austrian Science Fund (FWF) grant 10.55776/J4810 and from the University of Ljubljana, Faculty of Mathematics and Physics, for hosting him. He is also grateful to KTH Royal Institute of Technology for hosting him during a research visit and for providing accommodation and transportation.}

\author[M. Konvalinka]{Matja\v{z} Konvalinka}
\address{University of Ljubljana, Faculty of Mathematics and Physics and Institute for Mathematics, Physics and Mechanics, Ljubljana, Slovenia}
\thanks{M.Konvalinka was partially supported by ERC AdG KARST and by projects and programs J1-2452, N1-0218, P1-0294, P1-0297 of the Slovenian Research Agency.}

\author[S. Linusson]{Svante Linusson}
\address{Department of Mathematics, KTH Royal Institute of Technology, Stockholm, \mbox{Sweden}}
\thanks{S. Linusson was supported by the Swedish Science Council VR project 2022-03875}
\emailhref{linusson@kth.se}

\keywords{Pattern avoidance, alternating sign matrix, Schr\"oder numbers}

\date{\today}

\begin{document}

\begin{abstract}
We introduce \emph{extendable pattern avoidance} for alternating sign
rectangles (ASRs), the natural rectangular generalization of
alternating sign matrices (ASMs). An ASR extendably avoids a pattern
$\pi$ if it is the upper left corner of an ASM avoiding $\pi$. 
For each of the four length-three patterns in the
equivalence class $\{312, 132, 213, 231\}$ we establish a complete
system of recurrence relations enumerating extendably
$\pi$-avoiding ASRs of size $r \times k$ with a prescribed number $d$
of nonempty rows. For $\pi = 312$ we further conjecture a closed-form
expression and prove it on several diagonal slices via bijections
involving Schr\"oder ballot numbers, refined Schr\"oder numbers and 
Delannoy paths that do not cross the main diagonal vertically. For ASRs of size $(r-1)\times (r+1)$  extendably avoiding $213$ we give a bijection to little Schröder paths of length~$r$. The remaining patterns of length three, $123$
and $321$, are more elusive, mirroring the situation of ASMs. 
\end{abstract}

\maketitle

\section{Introduction}

Alternating sign matrices (ASMs) were introduced by Mills, Robbins, and
Rumsey in the early 1980s in connection with Dodgson's condensation
algorithm for the evaluation of determinants. Their conjectured product
formula
\[
   %\lvert \mathrm{ASM}_n \rvert
   %\;=\; 
   \prod_{k=0}^{n-1} \frac{(3k+1)!}{(n+k)!}
\]
for the number of $n \times n$ ASMs was famously proved by Zeilberger~\cite{Zei96}; shortly afterwards, Kuperberg~\cite{Kup96}
gave another proof using the six-vertex model. Since then, ASMs have
become a central object in algebraic combinatorics, intertwined with
plane partitions  and a host of other
structures, see~\cite{Bre99} for an overview.

Pattern avoidance for permutations is a classical theme going back to
Knuth's analysis of stack-sortable permutations~\cite{Knu73}, with a very large literature by now.

Pattern avoidance was extended to ASMs by Johansson and the third author
in~\cite{JL07}. Identifying a pattern $\pi$ of length $n$ with its
permutation matrix, one says that an ASM $M$ of size $m \times m$
\emph{contains} $\pi$ if there are $1$s in $M$ arranged in the same
relative position as the $1$s of the permutation matrix of $\pi$, and
\emph{avoids} $\pi$ otherwise. The study of pattern avoidance in ASMs has then continued in other works, see for example \cite{BESST24}.

Meanwhile, the study of ASMs has been extended to non-square matrices. We define an \emph{alternating sign rectangle} (ASR) to be an
$r \times k$ matrix with entries in $\{-1, 0, 1\}$ satisfying the
natural sign-alternation and boundary conditions inherited from ASMs;
the precise definition is given below. Any ASR can be extended to a
square ASM by appending rows below and columns to the right. An ASR
\emph{extendably avoids} a pattern $\pi$ if at least one such extension
to a square ASM avoids $\pi$.
\Cref{subfig:ASR} illustrates the definition.

The goal of this paper is, for each length-three pattern
$\pi$, to study $S^{\pi}_{r,k,d}$, the number of ASRs of size $r \times k$ with $d$ nonempty rows
that extendably avoid $\pi$. We establish a complete recursive
characterization for every $\pi \in \{312, 132, 213, 231\}$, splitting
into the cases $d = r$ (row-complete), $d = k$ (balanced), and
$d < r, k$ (general). Together with the base cases provided by the
ASM enumeration of~\cite{JL07}, these recurrences determine
$S^{\pi}_{r,k,d}$ uniquely. For $\pi = 312$ we further
present a conjectural closed-form expression in \cref{conj:312-formula}, which we verify in the case of ASMs. We further give bijective proofs for some special cases
with classical lattice path families: Schr\"oder ballot numbers, refined Schr\"oder numbers
$a_{i,j}$, Delannoy paths that do not cross the main diagonal
vertically (sequence \cite[A268208]{oeis}), and little Schr\"oder
paths.

We also note that for the cross pattern $\chi$, the only $3 \times 3$ ASM that is
not a permutation matrix, every ASR containing a $-1$ is
automatically extendably $\chi$-containing, and the enumeration
reduces to counting $\{0,1\}$-matrices with at most one $1$ in each
row and column.

For the remaining patterns of length three, $123$ and $321$, we are
not able to provide a complete enumeration; this is unsurprising, as
even at the level of ASMs no closed form is known. Resolving these
cases for ASRs would in particular constitute a step towards
understanding the open problem of enumerating $123$-avoiding ASMs.

The paper is organized as follows. \Cref{sec:def} gives the necessary definitions. \Cref{sec:ASM} reviews the
necessary enumerative and bijective background on pattern avoidance in
ASMs. \Cref{sec:recursive} contains our main results: the complete
recursive characterizations of extendably $\pi$-avoiding ASRs for
$\pi \in \{312, 132, 213,\allowbreak 231\}$, including a conjectural closed form for
$\pi = 312$. \Cref{sec:bijections} provides enumerative and bijective consequences for some special cases. Finally, \cref{sec:conclusion} concludes with a brief
discussion of the patterns $123$ and $321$, as well as the cross pattern.

This is the first in a series of two papers. In~\cite{hkl}, we study \emph{classical} avoidance of alternating sign rectangles, especially the asymptotic behavior.

\section{Definitions}\label{sec:def}

Let $r$ and $k$ be positive integers. An \emph{alternating sign rectangle} (ASR) of size $r \times k$ is an $r \times k$ matrix with entries in $\{-1,0,1\}$ such that
\begin{itemize}
\item the nonzero entries alternate in each row and column;
\item if a row contains a nonzero entry, then its leftmost and rightmost nonzero entries are $1$;
\item if a column contains a nonzero entry, then its topmost nonzero entry is $1$.
\end{itemize}
A row or column containing no nonzero entries is called \emph{empty}. An ASR is \emph{row-complete} if it has no empty rows.  
If also all columns sum to $1$, an ASR is called \emph{balanced}.

Let us emphasize that rows and columns are \emph{not} equivalent in our definition of alternating sign rectangles; in particular, the transpose of an ASR is not necessarily an ASR.

ASRs are a generalization of (square) \emph{alternating sign matrices} (ASMs). An ASM is a square ASR whose rows and columns each sum to $1$. This implies that, in each row and in each column, there is at least one nonzero entry, and the first and the last nonzero entry in each row and each column has to be a $1$.

\begin{remark}
    There are a few generalizations of ASMs to rectangular shape in the literature. We want to compare these notions to ASRs. The term \emph{partial ASMs} was coined by Heuer and Striker \cite{HS22} and it essentially denotes topleft corners of ASMs. These matrices had previously been introduced simply as ASMs by Behrend \cite{Beh08} and independently up to a horizontal reflection as \emph{alternating matrices} by Fortin \cite{For08}. ASRs are a proper subset of partial ASMs since nonempty rows have to sum to $1$. In joint work with Billey \cite{BK26}, the second author introduced \emph{quilts of alternating sign matrices}. In that context, quilts of type~$(C_r,C_k)$ are called rectangular ASMs of size $r \times k$. If $r \le k$, these rectangular ASMs are in bijective correspondence with \emph{monotone triangles} with $r$ rows and positive entries less than or equal to $k$ as defined by Fischer \cite{Fis06}, and correspond to row-complete ASRs in our terminology.
\end{remark}

In this paper, we study \emph{extendable pattern avoidance} in ASRs. To this end, we think of ASRs to be extendable to a full square ASM downwards and to the right. See \cref{fig:extension} for an example.

\begin{figure}[htbp]
    \centering
    \begin{subfigure}[c]{.4\textwidth}
    \centering
        $\begin{pmatrix}
        0 & 1 & 0 & 0 & 0 & 0\\
        1 &-1 & 0 & 0 & 1 & 0\\
        0 & 0 & 0 & 1 &-1 & 1\\
        0 & 0 & 0 & 0 & 0 & 0\\
        0 & 0 & 0 & 0 & 1 & 0
    \end{pmatrix}$
    \caption{}
    \label{subfig:ASR}
    \end{subfigure}
    
    \begin{subfigure}[c]{.4\textwidth}
    \centering
        $\begin{pmatrix}
        0 & 1 & 0 & 0 & 0 & 0 & 0\\
        1 &-1 & 0 & 0 & 1 & 0 & 0\\
        0 & 0 & 0 & 1 &-1 & 1 & 0\\
        0 & 0 & 0 & 0 & 0 & 0 & 1\\
        0 & 0 & 0 & 0 & 1 & 0 & 0\\
        0 & 1 & 0 & 0 & 0 & 0 & 0\\
        0 & 0 & 1 & 0 & 0 & 0 & 0
    \end{pmatrix}$
    \caption{}
    \label{subfig:ASR_ext1}
    \end{subfigure}\qquad
    \begin{subfigure}[c]{.4\textwidth}
    \centering
        $\begin{pmatrix}
        0 & 1 & 0 & 0 & 0 & 0 & 0\\
        1 &-1 & 0 & 0 & 1 & 0 & 0\\
        0 & 0 & 0 & 1 &-1 & 1 & 0\\
        0 & 0 & 0 & 0 & 0 & 0 & 1\\
        0 & 0 & 0 & 0 & 1 & 0 & 0\\
        0 & 0 & 1 & 0 & 0 & 0 & 0\\
        0 & 1 & 0 & 0 & 0 & 0 & 0
        \end{pmatrix}$
    \caption{}
    \label{subfig:ASR_ext2}
    \end{subfigure}
    \caption{\cref{subfig:ASR} shows a $5 \times 6$ ASR with four nonempty rows. The extension in \cref{subfig:ASR_ext1} contains the pattern~$312$, whereas the extension in \cref{subfig:ASR_ext2} avoids it. Thus, the ASR in \cref{subfig:ASR} extendably avoids the pattern~$312$.}
    \label{fig:extension}
\end{figure}

Let $\pi$ be a permutation of length~$n$, and let $M$ be an ASM of size $m \times m$ such that $m \ge n$. We say that $M$ \emph{contains} a pattern $\pi$ if there exist $1 \le i_1 < \dots < i_n \le m$ and $1 \le j_1 < \dots < j_n \le m$ such that $M_{i_1,j_{\pi(1)}} = \dots = M_{i_n,j_{\pi(n)}} = 1$. Otherwise, $M$ \emph{avoids} the pattern~$\pi$.

It is convenient to interpret any pattern $\pi$ as its corresponding permutation matrix. Then $M$ avoids the pattern~$\pi$ if there are no $1$s in $M$ that are in the same relative position to each other as the $1$s in the permutation matrix of $\pi$.

An ASR is said to \emph{extendably avoid} a pattern if there exists an extension of the ASR to a square ASM that avoids the pattern. In other words, an ASR \emph{extendably contains} a pattern if every extension contains the pattern.
The notion of extended pattern avoidance was first introduced in \cite{EL95} by Eriksson and the third author and was further investigated in \cite{Lin02}.

\section{Pattern avoidance in alternating sign matrices}
\label{sec:ASM}

We first consider pattern avoidance in alternating sign matrices. To this end, let $\asm{n}{\pi}$ denote the set of $n\times n$ ASMs avoiding the pattern~$\pi$. If a pattern~$\tau$ is obtained from $\pi$ by rotation or reflection, then it holds by symmetry that $\abs{\asm{n}{\pi}} = \abs{\asm{n}{\tau}}$. Thus, it trivially follows that $\abs{\asm{n}{312}} = \abs{\asm{n}{132}} = \abs{\asm{n}{213}} = \abs{\asm{n}{231}}$ and $\abs{\asm{n}{123}} = \abs{\asm{n}{321}}$. It was shown in \cite{JL07} that $\asm{n}{312}$ is enumerated by the \emph{(large) Schr\"oder number} $\Sch{n-1}$. The $n$th Schr\"oder number $\Sch{n}$ counts the lattice paths from $(0,0)$ to $(n,n)$ with steps $(1,0)$, $(0,1)$ and $(1,1)$ that never go above the diagonal $y=x$. These paths are called \emph{(large) Schr\"oder paths}. More generally, we call any lattice path consisting of the steps $(1,0)$, $(0,1)$ and $(1,1)$ a \emph{Delannoy path}.

The first few Schr\"oder numbers are given by $\Sch{0}=1$, $\Sch{1}=2$, $\Sch{2}=6$, and $\Sch{3}=22$; see \cite[A006318]{oeis} for further values. Their generating function is $\sum_{n \ge 0} \Sch{n} x^n = \frac{1-x-\sqrt{1-6x+x^2}}{2x}$.
\cref{fig:Schroeder} shows the six lattice paths that are counted by $\Sch{2}$. In addition, we set $\Sch{-1}=1$, which will be useful later.

\begin{figure}[htbp]
    \centering
    \begin{tikzpicture}[scale = .5]
        \draw[thin, dotted] (0,0) grid (2,2);
        \draw[very thin, red] (0,0) -- (2,2);
        \draw[very thick] (0,0) -- (2,0) -- (2,2);
    \end{tikzpicture}\qquad\begin{tikzpicture}[scale = .5]
        \draw[thin, dotted] (0,0) grid (2,2);
        \draw[very thin, red] (0,0) -- (2,2);
        \draw[very thick] (0,0) -- (1,0) -- (1,1) -- (2,1) -- (2,2);
    \end{tikzpicture}\qquad\begin{tikzpicture}[scale = .5]
        \draw[thin, dotted] (0,0) grid (2,2);
        \draw[very thin, red] (0,0) -- (2,2);
        \draw[very thick] (0,0) -- (1,0) -- (1,1) -- (2,2);
    \end{tikzpicture}\qquad\begin{tikzpicture}[scale = .5]
        \draw[thin, dotted] (0,0) grid (2,2);
        \draw[very thin, red] (0,0) -- (2,2);
        \draw[very thick] (0,0) -- (1,0) -- (2,1) -- (2,2);
    \end{tikzpicture}\qquad\begin{tikzpicture}[scale = .5]
        \draw[thin, dotted] (0,0) grid (2,2);
        \draw[very thin, red] (0,0) -- (2,2);
        \draw[very thick] (0,0) -- (1,1) -- (2,1) -- (2,2);
    \end{tikzpicture}\qquad\begin{tikzpicture}[scale = .5]
        \draw[thin, dotted] (0,0) grid (2,2);
        \draw[very thin, red] (0,0) -- (2,2);
        \draw[very thick] (0,0) -- (2,2);
    \end{tikzpicture}
    \caption{The six Schr\"oder paths that are counted by $\Sch{2}$.}
    \label{fig:Schroeder}
\end{figure}

We recall the bijection~$\Psi$ between $312$-avoiding ASMs of size $n \times n$ and Schr\"oder paths from $(0,0)$ to $(n-1,n-1)$ presented in \cite{JL07}\footnote{In fact, the bijection in \cite{JL07} is defined for $132$-avoiding ASMs. Since the pattern $312$ is obtained from $132$ by reflecting across a vertical axis, it is straightforward to adapt the structural decomposition of $132$-avoiding ASMs to $312$-avoiding ASMs.}. We will make essential use of this construction later.

The bijection~$\Psi$ is defined recursively. For the base case $n=1$, the matrix~$(1)$ is mapped to the empty path. Now assume that $n>1$. There is exactly one $1$ in the first column of the $312$-avoiding ASM~$M$, say in row~$i$. We distinguish between two cases depending on whether row~$i$ contains a $-1$.

If row $n-1\ge i\ge 2$ does not, then $M$ decomposes into two smaller $312$-avoiding matrices $M_1 \in \asm{n-i}{312}$ and $M_2 \in \asm{i-1}{312}$ as shown in \cref{fig:ASM_bij_1}. The grey area represents $0$s. By recursion, we already know $\Psi(M_1)$ and $\Psi(M_2)$. To obtain $\Psi(M)$, we take $\Psi(M_1)$ with a diagonal step $(1,1)$ attached and then add $(1,0)$, $\Psi(M_2)$, and finally $(0,1)$.

\begin{figure}[htbp]
    \centering
    \begin{tikzpicture}[scale=.5,baseline=(current bounding box.center)]
    \draw[fill=lightgray] (0,0) rectangle (5,5);

    \draw[fill=white] (0,2) rectangle (1,3);
    \draw[fill=white] (1,3) rectangle (3,5);
    \draw[fill=white] (3,0) rectangle (5,2);

    \draw (1,5) -- (1,3) -- (3,3) -- (3,5);
    \draw (3,0) -- (3,2) -- (5,2);
    \draw (0,2) -- (1,2) -- (1,3) -- (0,3);

    \node at (.5,2.5) {$1$};
    \node at (-.5,2.5) {$i$};
    \node at (4,1) {$M_1$};
    \node at (2,4) {$M_2$};
    \end{tikzpicture}$\quad\longmapsto\quad$
    \begin{tikzpicture}[scale=.5,baseline=(current bounding box.center)]      
    \draw[very thin, red] (0,0) -- (4,4);

    \draw[very thick] (1,1) -- (2,2);
    \draw[very thick] (2,2) -- (3,2);
    \draw[very thick] (4,4) -- (4,3);
    \draw[very thick, fill=gray] (0,0) -- (1,0) -- (1,1) -- cycle;
    \draw[very thick, fill=gray] (3,2) -- (4,2) -- (4,3) -- cycle;

    \node at (0,0) {$\bullet$};
    \node at (1,1) {$\bullet$};
    \node at (2,2) {$\bullet$};
    \node at (4,4) {$\bullet$};

    \node at (4,2.5)[right] {$\Psi(M_2)$};
    \node at (1,0.5)[right] {$\Psi(M_1)$};
 
    \end{tikzpicture}
    \caption{The bijection~$\Psi$ if row~$i$ contains no $-1$.}
    \label{fig:ASM_bij_1}
\end{figure}

If $i=1$, then $M_2=\emptyset$ and we obtain $\Psi(M)$ by adding just a diagonal step $(1,1)$ to $\Psi(M_1)$. If $i=n$, then $M_1=\emptyset$ and we do not have a diagonal step but obtain $\Psi(M)$ by  a step $(1,0)$ followed by $\Psi(M_2)$ and last an upstep $(0,1)$. 

If there is an additional $-1$ in row~$i\ge 2$, then $M$ decomposes as shown in \cref{fig:ASM_bij_2}, with $M_2 \in \asm{i-1}{312}$. The matrix $M_1$ is of size $(n-i+1) \times (n-i+1)$ and has a $-1$ in the top left corner. It becomes a $312$-avoiding ASM~$\widehat{M_1}$ if that $-1$ is changed into a $0$. To obtain $\Psi(M)$, we now take $\Psi(\widehat{M_1})$ (without a diagonal step attached) and add $(1,0)$, $\Psi(M_2)$, and $(0,1)$.

\begin{figure}[htbp]
    \centering
    \begin{tikzpicture}[scale=.5,baseline=(current bounding box.center)]
  \draw[fill=lightgray] (0,0) rectangle (5,5);

  \draw[fill=white] (0,2) rectangle (1,3);
  \draw[fill=white] (1,3) rectangle (3,5);
  \draw[fill=white] (2,0) rectangle (5,3);

  \draw (1,5) -- (1,3) -- (3,3) -- (3,5);
  \draw (2,0) -- (2,3) -- (5,3);
  \draw (2,2) -- (3,2) -- (3,3);

  \node at (.5,2.5) {$1$};
  \node at (-.5,2.5) {$i$};
  \node at (3.5,1.5) {$M_1$};
  \node at (2,4) {$M_2$};
  \node at (2.5,2.5) {$-1$};
    \end{tikzpicture}$\quad\longmapsto\quad$
    \begin{tikzpicture}[scale=.5,baseline=(current bounding box.center)]
    \draw[very thin, red] (0,0) -- (4,4);

    \draw[very thick] (1,1) -- (2,2);
    \draw[very thick] (2,2) -- (3,2);
    \draw[very thick] (4,4) -- (4,3);
    \draw[very thick, fill=gray] (0,0) -- (2,0) -- (2,2) -- cycle;
    \draw[very thick, fill=gray] (3,2) -- (4,2) -- (4,3) -- cycle;

    \node at (0,0) {$\bullet$};
    \node at (2,2) {$\bullet$};
    \node at (4,4) {$\bullet$};

    \node at (4,2.5)[right] {$\Psi(M_2)$};
    \node at (2,0.5)[right] {$\Psi(\widehat{M_1})$};
    \end{tikzpicture}
    \caption{The bijection~$\Psi$ if row~$i$ contains a $-1$.}
    \label{fig:ASM_bij_2}
\end{figure}

It is not difficult to see that this construction is reversible, for details see \cite{JL07}.

\section{Recursive enumeration of extended pattern avoidance in alternating sign rectangles}\label{sec:recursive}
In this section, we establish recurrence relations for the number of extendably $312$-, $132$-, $213$-, and $231$-avoiding ASRs with a given number of nonempty rows. To this end, we define $\asr{r,k,d}{\pi}$ to be the set of extendably $\pi$-avoiding ASRs of size $r\times k$ with $d$ nonempty rows, and we let $\s{\pi}{r}{k}{d}$ denote its cardinality. We readily see that $\asr{r,k,d}{\pi} = \varnothing$ if $r < d$ or $k < d$. Thus, we assume $r,k\ge d\ge 0$. In addition, we set $\s{\pi}{r}{k}{0} = 1$ for all $r$ and $k$.

To obtain recurrence relations for $\s{\pi}{r}{k}{d}$, we follow the same strategy for each pattern~$\pi$. First, we consider the case $d = r$, which means that there are no empty rows. We then consider the case $d = k$, implying that there are no empty columns. This follows from the fact that all rows are either empty or they sum to $1$. In addition, the topmost nonzero entry in each column is $1$. Consequently, there are at least as many nonempty columns as there are nonempty rows. If $d=k$ rows are nonempty, then at least $k$ columns are nonempty; therefore, there are no empty columns. Note, however, that %column-completeness 
no empty columns does not imply $d=k$ since columns do not necessarily sum to $1$. If we require the matrix to be balanced, then every column is nonempty and hence $d=k$.

Finally, we deal with $d < r, k$. Thus, we obtain three recurrence relations for every pattern. Together with the base cases $\s{\pi}{0}{k}{0} = 1 = \s{\pi}{r}{0}{0}$ and $\s{\pi}{n}{n}{n} = \Sch{n-1}$, these relations uniquely determine $\s{\pi}{r}{k}{d}$ for any $r,k\ge d\ge 0$ and any permutation $\pi \in \{312,132,213,231\}$.

Before turning to the individual cases, we state a simple but crucial observation.

\begin{lemma}
    Any ASR that extendably avoids a permutation of length~$3$ does not contain two $-1$s in the same row or column. 
\end{lemma}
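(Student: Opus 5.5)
The approach is to prove the contrapositive. Suppose an ASR $A$ has two $-1$s in the same row or in the same column. We show that every extension $M$ of $A$ to a square ASM contains \emph{every} permutation pattern of length~$3$. Since every extension of $A$ is in particular an ASM containing $A$ as its upper left corner, it suffices to find, inside $M$, $1$s in all six relative positions.

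First consider two $-1$s in the same row $i$, in columns $j<j'$. By sign alternation within the row, there are $1$s in row $i$ at columns $a<j$, at $b$ with $j<b<j'$, and at $c>j'$. By alternation in column $j$ of $A$, the topmost nonzero entry of column $j$ is a $1$, so there is a $1$ above position $(i,j)$. Since column $j$ of $M$ must end with a $1$ and sums to $1$, there is also a $1$ below $(i,j)$ in $M$. The same holds for column $j'$. This gives the following configuration in $M$:
\begin{itemize}
\item a $1$ at $(u,j)$ with $u<i$, and a $1$ at $(d,j)$ with $d>i$;
\item a $1$ at $(u',j')$ with $u'<i$, and a $1$ at $(d',j')$ with $d'>i$;
\item $1$s at $(i,a)$, $(i,b)$, $(i,c)$ with $a<j<b<j'<c$.
\end{itemize}

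From these points we read off the six patterns. Using the up/down $1$s in column $j$ together with row-$i$ $1$s to the right of $j$:
\begin{itemize}
\item $(u,j),(i,b),(d',j')$ gives $123$ and $(d,j),(i,b),(u',j')$ gives $321$;
\item $(u,j),(d',j'),(i,c)$ in column order $j<j'<c$ with row order $u<i<d'$ gives $132$;
\item $(i,a),(u,j),(d,j)$ gives patterns using column $a$ on the left. For example, $(i,a),(u,j),(d',j')$ has column order $a<j<j'$ and rows $i,u,d'$ with relative order $2,1,3$, so it gives $213$.
\end{itemize}
Analogous choices give $231$ and $312$. The planned bookkeeping is this: the middle row $i$ supplies the middle value, while the $1$s strictly above and below rows $i$ in columns $j$ and $j'$ supply the top and bottom values. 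With the row-$i$ $1$s available at left ($a$), middle ($b$) and right ($c$) column positions, every arrangement is realized. For each $\pi$ we choose which of $a,b,c$ carries the middle value and take the two others from columns $j,j'$ (or from $\{a\}\cup\{j\}$ etc.) with the appropriate up/down choice. Only two distinct columns other than the middle one are ever needed, and they can be taken from $\{a,j,j'\}$ or $\{j,j',c\}$.

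The column case, with two $-1$s in the same column, is symmetric. Alternation in the column gives $1$s above, between, and below in $M$, and each $-1$'s row has $1$s to its left and right (in $A$ already). The same case analysis applies with rows and columns interchanged. The asymmetry of the ASR definition is irrelevant here because we work in the full ASM $M$, where all boundary conditions hold.

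The only real obstacle is making sure that every one of the six patterns is actually found, that is, doing the bookkeeping carefully. Strictly, only the four patterns $312,132,213,231$ and also $123,321$ are needed. I would tabulate the choice of three $1$s for each pattern explicitly, using the middle value in row $i$ and extreme rows from the column $1$s. Since only the pattern statement for length~$3$ is claimed, this finite check completes the proof.
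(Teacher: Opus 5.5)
Your proof is correct and follows the same route as the paper, which simply asserts that once a row or column contains two $-1$s, every extension contains all six length-$3$ patterns. Your explicit triples fill in the check the paper calls straightforward, and your column case via rows and columns interchanged also works. In the row case, row $i$ supplies the middle row of the pattern through $a$, $b$ or $c$, and columns $j,j'$ supply the top and bottom rows with either up/down assignment; these $3\cdot 2=6$ choices realize all six permutations.
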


\begin{proof}
    It is straightforward to see that any extension contains all of the patterns $312$, $132$, $213$, $231$, $123$, and $321$ as soon as there is one row or one column with at least two $-1$s.
\end{proof}

\subsection{\texorpdfstring{Extendably $\mathbf{312}$-avoiding ASRs}{Extendably 312-avoiding ASRs}}
In this section, we will focus on the pattern~$312$ which means avoiding $1$s relatively positioned as in the following matrix:
\[
\begin{array}{c@{\quad}l}
\begin{array}{c}
3 \\
1 \\
2 \\
\end{array}
&
\left(
\begin{array}{ccc}
0 & 0 & 1 \\
1 & 0 & 0 \\
0 & 1 & 0 \\
\end{array}
\right)
\end{array}
\]

First, we derive a recurrence relation for the case $d=r$, that is, when the ASR is row-complete.

\begin{prop}\label{lem:r=d}
    For $1\le r\le k$, we have 
    \begin{equation*}
    \s{312}{r}{k}{r}=\s{312}{r}{k-1}{r}+\s{312}{r-1}{k-1}{r-1}+\sum_{i=2}^{r} \Sch{i-2}\cdot \s{312}{r+1-i}{k+1-i}{r+1-i}.
    \end{equation*}
\end{prop}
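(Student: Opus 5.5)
We decompose a row-complete extendably $312$-avoiding ASR $A$ of size $r\times k$ according to its first column, imitating the bijection~$\Psi$ of \cref{sec:ASM}. Since $A$ is row-complete, every row sums to $1$. By the earlier lemma, no row or column contains two $-1$s. The first column of $A$ contains at most one $1$: its topmost nonzero entry is a $1$, and below it the only possible further entries would be a $-1$ followed by a $1$, which would force a $312$ in every extension.

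This leaves three cases, one for each term of the recurrence.

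\textbf{Case 1: the first column is empty.} Deleting it gives a row-complete ASR of size $r\times(k-1)$. Conversely, prepending an empty column to any such ASR preserves extendable avoidance. Given a good extension $M$ of the smaller ASR, we put the new column's $1$ in a new last row at position $(n+1,1)$. A $1$ in the bottom-left corner cannot serve as the $3$, the $1$ or the $2$ of a $312$, since it would need entries either above-left or below it. This case contributes $\s{312}{r}{k-1}{r}$.

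\textbf{Case 2: the $1$ in the first column is in row~$1$.} Row~$1$ then sums to $1$ and has no further nonzero entries. Deleting row~$1$ and column~$1$ gives an element of $\asr{r-1,k-1,r-1}{312}$. The converse works because a $1$ in the top-left corner of a square ASM cannot participate in a $312$. This contributes $\s{312}{r-1}{k-1}{r-1}$.

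\textbf{Case 3: the $1$ in the first column is in row $i\ge 2$.} For every extension to avoid an occurrence of $312$ using this $1$ as the ``1'', the rows above $i$ must have their $1$s arranged so that they form, together with columns $2,\dots,i$, the block $M_2$ of \cref{fig:ASM_bij_1,fig:ASM_bij_2}. Here $M_2$ is an $(i-1)\times(i-1)$ $312$-avoiding ASM occupying rows $1,\dots,i-1$ and columns $2,\dots,i$, and it can be chosen in $\Sch{i-2}$ ways. The key claim is that the rows $i,\dots,r$ together with columns $i,\dots,k$ then form a row-complete extendably $312$-avoiding ASR of size $(r+1-i)\times(k+1-i)$, after merging row~$i$ as in $\Psi$. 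The two subcases of $\Psi$ must be treated here:
\begin{itemize}
\item If row~$i$ has a $-1$, it sits in column~$i$ below $M_2$'s last column. Turning it into $0$ gives the $\widehat{M_1}$ structure.
\item If row~$i$ has no $-1$, its $1$ is in column~$1$ and the remaining part has one fewer row.
\end{itemize}
I would try to show that these two subcases combine bijectively into a single row-complete ASR with top-left cell playing the role of row $i$, namely the first column of the remaining block. This parallels how $\Psi$ yields either a diagonal step or not. The result is the sum $\sum_{i=2}^r \Sch{i-2}\,\s{312}{r+1-i}{k+1-i}{r+1-i}$.

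The main obstacle is Case~3. First, we must prove that extendable avoidance forces this block structure even though the extension rows lie below row $r$. Second, we must prove the converse: good extensions of $M_2$ and of the smaller ASR glue into a good extension of $A$. For the forward direction, I would argue that any $1$ in rows $1,\dots,i-1$ at a column beyond the block would combine with the $1$ at $(i,1)$ and a $1$ in a later column further down to form $312$. Such a later $1$ must exist, because each column eventually sums to $1$ in the extension. For gluing, I would place the extension of the lower block so that it occupies the same relative position as $M_1$ in \cref{fig:ASM_bij_1}.
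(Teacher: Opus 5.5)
Your proposal is correct and follows essentially the paper's argument: the same case split on the first column, the same forced $312$-avoiding ASM in rows $1,\dots,i-1$ and columns $2,\dots,i$ counted by $\Sch{i-2}$, and the same two subcases for row~$i$. Your bijective merging (top-left cell of the lower block equal to $1$, versus $0$ after erasing the $-1$) is just a direct form of the paper's cancellation of the $\s{312}{r-i}{k-i}{r-i}$ terms, and one small correction: the first column can never contain a $-1$ simply because the leftmost nonzero entry of every row is $1$, so no pattern argument is needed there.
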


\begin{proof}
    Consider a row-complete and extendably $312$-avoiding ASR of size $r \times k$. We distinguish cases according to the first column. If the first column is empty, we may always place a 1 in the extension in the first column below everything else without introducing an occurrence of the pattern~$312$. This yields the first term in the recursion.

    Now assume there is a $1$ in the first column, say in row~$i \ge 1$. There are two possibilities, either the remaining entries of row~$i$ are $0$s or there is a $-1$ in that row; note that the latter case can only occur if $i \neq 1$. See \cref{fig:312_d=r} for an illustration.

\begin{figure}[htbp]
\centering
\begin{subfigure}[c]{.45\textwidth}
\begin{tikzpicture}[scale=0.5]

    \draw[fill=lightgray] (0,0) rectangle (7,5);

    \draw[fill=white] (1,3) rectangle (3,5);
    \draw[fill=white] (0,2) rectangle (1,3);
    \draw[fill=white] (3,0) rectangle (7,2);

    \node at (.5,2.5) {$1$};
    \node at (-.5,2.5) {$i$};
    \node at (2,4) {$M$};
    \node at (5,1) {$R$};

\end{tikzpicture}
\caption{}
\label{subfig:312_d=r_a}
\end{subfigure}
\begin{subfigure}[c]{.45\textwidth}
\centering
\begin{tikzpicture}[scale=0.5]

    \draw[fill=lightgray] (0,0) rectangle (7,5);

    \draw[fill=white] (1,3) rectangle (3,5);
    \draw[fill=white] (0,2) rectangle (1,3);
    \draw[fill=white] (2,0) rectangle (7,3);
    \draw[fill=white] (2,2) rectangle (3,3);

    \node at (.5,2.5) {$1$};
    \node at (-.5,2.5) {$i$};
    \node at (2.5,2.5) {$-1$};
    \node at (2,4) {$M$};
    \node at (4.5,1.5) {$R$};

\end{tikzpicture}
\caption{}
\label{subfig:312_d=r_b}
\end{subfigure}
\caption{The structure of a row-complete and extendably $312$-avoiding ASR.}
\label{fig:312_d=r}
\end{figure}

First note that if there is a column with only $0$s in the top $i-1$ rows, then there must be a $1$ further down in that column, either in the ASR itself or in its extension. Thus, all other columns to its right must also be empty in the first $i-1$ rows.
That means that, beginning with the second column, all columns that are not empty in the first $i-1$ rows occur consecutively. If in one of these columns the bottommost nonzero entry in the top $i-1$ rows is a $-1$, then there will be a $1$ further down that column, again either in the ASR or in its extension. Since all rows sum to $1$, there is also a $1$ to the right of that $-1$. That would create a $312$-pattern with the $1$ in the $i$th row and the first column. Consequently, columns $2$ to $i$ will have a full $312$-avoiding ASM~$M$ in the first $i-1$ rows. We know there are $\Sch{i-2}$ such possibilities.

In the first case (\cref{subfig:312_d=r_a}), the bottom right rectangle~$R$ must form a row-complete and $312$-avoiding ASR of size $(r-i)\times (k-i)$, that is, $R\in\asr{r-i,k-i,r-i}{312}$. We therefore get the sum $\sum_{i=1}^k \Sch{i-2}\cdot \s{312}{r-i}{k-i}{r-i}$. In particular, if $i=1$, we obtain the term $\s{312}{r-1}{k-1}{r-1}$ since $\Sch{-1}=1$.

In the second case (\cref{subfig:312_d=r_b}), the rectangle~$R$ in the bottom right is one row and one column larger. It becomes an extendably $312$-avoiding ASR~$\widehat{R}$ if we replace the $-1$ in its top left corner by $0$.
Consequently, the possible $\widehat{R}$ are those in $\asr{r+1-i,k+1-i,r+1-i}{312}$ restricted to matrices without a $1$ in the top left corner, which are counted by $\s{312}{r-i}{k-i}{r-i}$. Thus, the second case contributes
\[
\sum_{i=2}^r \Sch{i-2}\cdot \left(\s{312}{r+1-i}{k+1-i}{r+1-i}-\s{312}{r-i}{k-i}{r-i}\right).
\]
Finally, note how all negative terms cancel with terms from the first case, and the proposition follows.

\end{proof}

Next, we study the case of extendably $312$-avoiding ASRs that are balanced, that is, $d=k$.

\begin{prop}\label{lem:k=d}
    For $1\le k<r$, we have
    \begin{equation}\label{eq:312_k=d}
    \s{312}{r}{k}{k}=\sum_{j=0}^k \s{312}{j}{k}{j}\binom{r-j-1}{k-j}.
    \end{equation}
\end{prop}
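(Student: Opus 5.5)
We decompose a balanced ASR according to its first empty row. Let $A\in\asr{r,k,k}{312}$ with $k<r$. Since $A$ has $k<r$ nonempty rows, it has at least one empty row. Let $j+1$ be the index of the first empty row, so $0\le j\le k$. The top $j$ rows form a row-complete ASR $A'$ of size $j\times k$. Any extension of $A$ restricts to an extension of $A'$, so $A'$ is extendably $312$-avoiding, i.e.\ $A'\in\asr{j,k,j}{312}$. Because $A$ is balanced, exactly $k-j$ columns of $A'$ have column sum $0$. These columns must each receive one more $1$ in the rows below row $j+1$, and the remaining $k-j$ nonempty rows of $A$ lie among the $r-j-1$ rows below row $j+1$. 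The plan is to show that $A$ is determined by $A'$ together with the choice of which $k-j$ of these $r-j-1$ rows are nonempty. This gives exactly the term $\s{312}{j}{k}{j}\binom{r-j-1}{k-j}$ in \eqref{eq:312_k=d}.

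\textbf{Step 1 (the lower part is forced).} In any extension, row $j+1$ of $A$ is empty within the first $k$ columns, so the extension places its $1$ in some column $c>k$. Any two $1$s in rows below $j+1$, in columns $a<b\le k$, with the $1$ in column $a$ higher than the $1$ in column $b$, would form a $312$ together with that $1$. Hence the $1$s in the lower nonempty rows, within the first $k$ columns, must appear with strictly decreasing column index from top to bottom. Next we exclude $-1$s below row $j+1$. A row containing a $-1$ has $1$s on both sides of it. The $-1$'s column must have total sum $1$ in $A$, which is balanced, so that column contains a further $1$ below the $-1$, or else the $-1$ cancels a $1$ above it. In the first case, the left $1$ of the row and that lower $1$ give an increasing pair, producing a $312$ with the $1$ in row $j+1$. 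In the second case, the right $1$ of the row lies in a column which must later drop back to sum $1$, yielding the same kind of increasing pair. I would check these cases carefully, using the Lemma that no row or column carries two $-1$s. Once $-1$s are excluded, each lower nonempty row contains a single $1$, placed in one of the $k-j$ columns that have zero partial sum, and the decreasing condition fixes which column it is. So the lower block is determined by the set of nonempty rows.

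\textbf{Step 2 (converse construction).} Conversely, fix $A'\in\asr{j,k,j}{312}$ and a $(k-j)$-subset of rows $j+2,\dots,r$. Build $A$ by putting an empty row $j+1$ and filling the chosen rows with single $1$s in the zero-sum columns of $A'$, in decreasing column order. We must show that $A$ extendably avoids $312$. This is the step I expect to be the main obstacle. My first attempt is to take an avoiding extension $N$ of $A'$ and first argue that its rows below row $j$ can be rearranged into a decreasing, anti-diagonal arrangement on the remaining free columns without creating a $312$. The argument: the only nonzero entries below row $j$ in $N$ are $1$s in columns with zero partial sum. A $312$ that uses two of these decreasing $1$s would need a higher $1$ to the right of both. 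I would exclude this using the structure from the proof of \cref{lem:r=d}, namely that the columns that are nonempty above a given row form consecutive blocks. The empty rows of $A$ then get their extension $1$s in columns $>k$, also arranged decreasingly and interleaved consistently, so that all $1$s below row $j$ form a decreasing sequence. A decreasing sequence cannot host the "$1$" and "$2$" of a $312$. Any $312$ would therefore have to use two $1$s from $A'$'s part, and the same pattern would then already appear in $N$, a contradiction.

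\textbf{Step 3 (summation).} The map $A\mapsto(j,A',\text{set of nonempty rows})$ is a bijection onto $\bigsqcup_{j=0}^{k}\asr{j,k,j}{312}\times\binom{[r-j-1]}{k-j}$. Summing over $j$ gives \eqref{eq:312_k=d}. For $j=k$ the binomial coefficient is $1$ and all later rows are empty. For $j=0$ we use the convention $\s{312}{0}{k}{0}=1$, and the whole matrix is an anti-diagonal placement of $k$ ones.
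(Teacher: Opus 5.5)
Your decomposition by the first empty row is the same as the paper's, and Step 1 is correct. Your second case for a $-1$ cannot actually occur: since $A$ is balanced, the column of a $-1$ must return to sum $1$ inside $A$. So there is always a $1$ below the $-1$, and together with the $1$ to its left this gives the forbidden increasing pair.

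The gap is in Step 2, the existence of an avoiding extension, which the paper only asserts. Two of your claims there are false.
\begin{itemize}
\item An avoiding extension $N$ of $A'$ can have $-1$s below row $j$. For $A'=(0\;1\;0)\in\asr{1,3,1}{312}$, the cross $\chi$ is a $312$-avoiding extension with a $-1$ in row $2$.
\item More seriously, the $1$s below row $j$ can never all be decreasing once some empty row $q$ of $A$ lies below a nonempty row $p\ge j+2$. In every extension the $1$ of row $q$ sits in a column $>k$, to the right of the $1$ of row $p$.
\end{itemize}
Placing the new $1$s decreasingly actually creates a $312$. The $1$ of row $j+1$ lands in the rightmost new column, and together with the $1$s of rows $p<q$ it forms the pattern. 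For example, $j=1$, $k=2$, $r=4$, $A'=(1\;0)$ with row $3$ nonempty gives the permutation matrix of $1423$, which contains $312$.

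The repair is short, and you do not need $N$ at all. Put the extension $1$s of the $r-k$ empty rows of $A$ (row $j+1$ included) into the new columns $k+1,\dots,r$ in \emph{increasing} order from top to bottom. This is an $r\times r$ ASM.

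Now take a putative $312$ at $(i_1,c_1),(i_2,c_2),(i_3,c_3)$ with $i_1<i_2<i_3$ and $c_2<c_3<c_1$. Split by how many of the rows are $\le j$:
\begin{itemize}
\item All three: the pattern lies in $A'$ itself, which is impossible.
\item Exactly two: then $c_3<c_1\le k$, so $c_3$ is a zero-sum column of $A'$. Every extension of $A'$ has a $1$ in that column below row $j$, so $A'$ would extendably contain $312$.
\item Exactly one: then $c_1\le k$ forces rows $i_2,i_3$ into the anti-diagonal block, which has no increasing pair.
\item None: the increasing pair $(i_2,i_3)$ must end in an empty row of $A$, so $c_3>k$. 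Then $c_1>c_3$ forces $i_1$ to be an empty row above $i_3$ with a larger new column, contradicting the increasing placement.
\end{itemize}
With this extension, Step 2 and hence your bijection in Step 3 go through.
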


\begin{proof}
    Let $j+1$ be the first empty row of an extendably $312$-avoiding ASR, which is illustrated in \cref{fig:312_d=k}.
    
    \begin{figure}[htbp]
    \centering
    \begin{tikzpicture}[scale=0.5]

    \draw[fill=lightgray] (0,0) rectangle (5,7);
    \draw[fill=white]  (0,0) rectangle (5,3);
    \draw[fill=white]  (0,4) rectangle (5,7);

    \node at (-1,3.5) {$j+1$};
    \node at (2.5,5.5) {$R$};
    \node at (1,.5) {$1$};
    \node at (2.5,1.5) {$\iddots$};
    \node at (4,2.5) {$1$};

    \end{tikzpicture}
    \caption{The structure of a balanced %column-complete 
    and extendably $312$-avoiding ASR.}
    \label{fig:312_d=k}
    \end{figure}
    
    In the first $j$ rows, there is a row-complete extendably $312$-avoiding ASR~$R$, hence $R \in \asr{j,k,j}{312}$, which yields $\s{312}{j}{k}{j}$ possibilities. When extending the ASR to an ASM, there will be a $1$ somewhere to the right in row~$j+1$. Thus, below that row in the ASR the $1$s must not form any $12$-pattern. For every choice of $k-j$ rows among the $r-1-j$ rows, there is exactly one way to fill them with $1$s to obtain an extendably $312$-avoiding ASR.
\end{proof}

We are now ready to treat the general case.

\begin{thm}\label{lem:general}
    For $d< r, k$ we have 
    \[
    \s{312}{r}{k}{d}=\s{312}{r}{k-1}{d}+\s{312}{r-1}{k-1}{d-1}+\sum_{i=2}^d \Sch{i-2}\cdot \left (\s{312}{r+1-i}{k+1-i}{d+1-i}-\binom{r-i}{d+1-i} \right ).
    \]
\end{thm}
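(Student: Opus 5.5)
We follow the proof of \cref{lem:r=d} and decompose according to the first column of an ASR in $\asr{r,k,d}{312}$.

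\textbf{First column empty.} We place a $1$ in the first column of the extension, below everything else. As in \cref{lem:r=d}, this creates no occurrence of $312$, so deleting the first column gives a bijection onto $\asr{r,k-1,d}{312}$. This yields the term $\s{312}{r}{k-1}{d}$.

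\textbf{First column nonempty.} Then the first column contains exactly one $1$, say in row $i$. The argument of \cref{lem:r=d} shows that the rows above row $i$ must all be nonempty. Indeed, an empty row above row $i$ receives a $1$ to its right in the extension. Any nonempty row below row $i$ has its $1$s, or the $1$s in the extension under the relevant columns, lying to the right of column $1$. Together with the $1$ at $(i,1)$ this forces a $312$ unless there are no nonempty rows below. We still have to check this claim carefully: if rows below are allowed to be empty, such configurations may be legal and must be counted separately.

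Assuming rows $1,\dots,i-1$ are nonempty, the consecutive-columns argument applies verbatim. It forces a full $312$-avoiding ASM $M$ of size $(i-1)\times(i-1)$ in columns $2,\dots,i$, giving $\Sch{i-2}$ choices. The lower right block $R$ then has $d-i$ nonempty rows.
\begin{itemize}
\item[(a)] If row $i$ contains no $-1$, then $R$ ranges over $\asr{r-i,k-i,d-i}{312}$.
\item[(b)] If row $i$ contains a $-1$, then $R$ is one row and one column larger. Replacing the $-1$ in its corner by $0$ gives an element $\widehat R$ of $\asr{r+1-i,k+1-i,d+1-i}{312}$ whose top row is empty. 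This holds because that row sums to $1$ after the $-1$ is removed, so it is counted among the nonempty rows. More precisely, $\widehat R$ is such an element that cannot have a $1$ in the top left corner in a way that would clash.
\end{itemize}

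\textbf{Cancellation.} Mimicking the cancellation in \cref{lem:r=d}, case (a) summed over $i$ together with the corner correction of case (b) should telescope. What remains is $\s{312}{r-1}{k-1}{d-1}$ (from $i=1$) plus
\[
\sum_{i\ge2}\Sch{i-2}\,\s{312}{r+1-i}{k+1-i}{d+1-i}
\]
minus a correction term.

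\textbf{The correction term $\binom{r-i}{d+1-i}$.} Here is the new phenomenon compared with the row-complete case. Among the $\widehat R\in\asr{r+1-i,k+1-i,d+1-i}{312}$, some cannot arise as the lower part with a $-1$ at the corner. These are the ones whose first column, or corner structure, is incompatible, and they must be subtracted. I expect them to be exactly the matrices analysed in \cref{lem:k=d}: the first row is the one affected, and the remaining nonempty rows consist of single $1$s with no $12$-pattern. That is a choice of $d+1-i$ rows among $r-i$, filled in a forced anti-diagonal way, which gives $\binom{r-i}{d+1-i}$.

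\textbf{Main obstacle.} The hardest step is to identify precisely which $\widehat R$ fail to lift, or are overcounted, when $d<r$. Only then do the remaining negative terms cancel against case (a) exactly as before. I would first verify this on small cases, say $r=3$, $k=3$, $d=2$, and then prove it by comparing the extension of an empty row in $\widehat R$ with the forced $312$ created through the $1$ at $(i,1)$.
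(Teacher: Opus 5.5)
Your overall frame matches the paper's: split by the first column, take the ASM $M$ in columns $2,\dots,i$, and let the corner terms $\s{312}{r-i}{k-i}{d-i}$ from case (b) cancel against case (a). But the one genuinely new step, explaining the term $\binom{r-i}{d+1-i}$, is missing, and your description of case (b) gets the key condition backwards. You say $\widehat R$ has an empty top row; in fact the constraint is the opposite. Row $i$ of the original ASR reads $1,0,\dots,0,-1,\dots$ and sums to $1$, so there must be a $1$ after the $-1$. Hence $\widehat R$ must have a \emph{nonempty} first row, in addition to having $0$ in its top left corner. In the row-complete case this was automatic. For $d<r$, however, $\asr{r+1-i,k+1-i,d+1-i}{312}$ contains matrices with an empty first row, and these are exactly what must be subtracted. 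They are not matrices with an ``incompatible first column or corner''; the corner term already handles those. This is the paper's argument.

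To count these matrices you cannot simply quote \cref{lem:k=d}, because here there are $k+1-i>d+1-i$ columns. The empty first row receives a $1$ to the right of column $k+1-i$ in every extension. Consequently, below it:
\begin{itemize}
\item there is no $12$ among the $1$s;
\item there is no $-1$, since the $1$ above it and the $1$ to its right would form a $12$;
\item there is no $1$ to the left of an empty column, since that column's $1$ lies below the rectangle in the extension.
\end{itemize}
So the $1$s fill the last $d+1-i$ columns anti-diagonally, and only the choice of nonempty rows among the $r-i$ lower rows is free. This gives $\binom{r-i}{d+1-i}$, after also checking that such matrices do extend.

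The claim you leave open, that rows $1,\dots,i-1$ are nonempty, is true, but it needs $d<k$, and your sketch uses the wrong mechanism. Rows $1,\dots,i$ have total sum at most $d$. Therefore some column $c'\in\{2,\dots,k\}$ has partial sum $0$ after row $i$, and so has a $1$ below row $i$ in every extension. Suppose some row $j<i$ is empty; its extension $1$ lies to the right of column $k$. That $1$, the $1$ at $(i,1)$, and the $1$ in column $c'$ below row $i$ form a $312$. This holds whether or not the rows below row $i$ are nonempty in the ASR.
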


\begin{proof}
    The proof of this theorem is very similar to the proof of \cref{lem:r=d}. The difference is that when there is a $1$ in row~$i$ of the first column and a $-1$ in the same row, the bottom right rectangle cannot have an empty first row. We must therefore subtract those ASRs counted by $\s{312}{r+1-i}{k+1-i}{d+1-i}$ whose first row is empty. By the same reasoning as in the proof of \cref{lem:k=d}, there is a unique way to fill in the $1$s for every choice of nonempty rows.
\end{proof}

Based on computer experiments, we conjecture the following closed-form formula for the number of extendably $312$-avoiding ASRs.

\begin{conj}\label{conj:312-formula}
For integers $d \leq r,k$, we have
\begin{multline}\label{eq:conj312}
    \s{312}{r}{k}{d} = \sum_{i=1}^{d} 2^{i-1} \binom{k}{i} \left( \binom{r-2}{d-i} - 2\binom{r-2}{d-i-2} \right) + \binom{r}{d}\\
    + \sum_{i \geq 0 }^{} C_{2i} \binom{r-1+2i}{d-1-2i} + 2 \sum_{i=1}^{d} (-1)^i \binom{r}{d-i},
\end{multline}
where $C_n = \frac{1}{n+1} \binom{2n}{n}$ denotes the $n$th Catalan number. 
\end{conj}

Let $F(r,k,d)$ denote the right-hand side of \eqref{eq:conj312}. We easily see that $F(0,k,0)=F(r,0,0)=1$ as well as $F(1,k,1)=k$ and $F(r,1,1)=r$. If $r=k=d=n \ge 1$, we obtain that $F(n,n,n)$ is equal to
\begin{equation*}
    \sum_{i=1}^{n} 2^{i-1} \binom{n}{i} \left( \binom{n-2}{n-i} - 2\binom{n-2}{n-i-2} \right)
    + \sum_{i \geq 0 }^{} C_{2i} \binom{n-1+2i}{n-1-2i} - 1
\end{equation*}
since
\begin{equation*}
 \sum_{j=1}^{n} (-1)^j \binom{n}{n-j}=\begin{cases} 0&\text{if }n=0,\\
 -1&\text{if }n \geq 1.
 \end{cases}
\end{equation*}
In the following, we show that $F(n,n,n) = \Sch{n-1}$. It was proved in \cite{Bar21} that the sum $\sum_{i \geq 0 }^{} C_{2i} \binom{n-1+2i}{n-1-2i}$ equals the $(n-1)$th \emph{little Schr\"oder number} (or \emph{Schr\"oder--Hipparchus number}) which is equal to half of the $(n-1)$th large Schr\"oder number.

For the remaining part, we write
\begin{multline*}
    \sum_{i=1}^{n} 2^{i-1} \binom{n}{i} \left( \binom{n-2}{n-i} - 2\binom{n-2}{n-i-2} \right)
    - 1\\
    = \sum_{i=1}^{n} 2^{i-1} \binom{n}{i} \binom{n-2}{n-i} -  \sum_{i=0}^{n} 2^{i} \binom{n}{i} \binom{n-2}{n-i-2}\\
    =n(n-1) {}_2F_1(-n+2,-n+2;3;2) - {}_2F_1(-n+2,-n;1;2)
\end{multline*}
by using hypergeometric notation\footnote{See \cite[Section~2]{AAR99} for a reference on hypergeometric functions and contiguous relations.}. By iterating Gau{\ss}' original contiguous relations, we obtain the identity
\begin{multline*}
    (1-a+c)(b+1)z {}_2F_1(a,b+2;c+2;z) - (c+z(1-a+b))(1+c) {}_2F_1(a,b+1;c+1;z)\\
    + c(1+c) {}_2F_1(a,b;c;z) = 0.
\end{multline*}
Setting $a=-n+2$, $b=-n$, $c=1$ and $z=2$, we have
\begin{multline*}
    n(n-1) {}_2F_1(-n+2,-n+2;3;2) - {}_2F_1(-n+2,-n;1;2) \\
    = {}_2F_1(-n+2,-n+1;2;2).
\end{multline*}
Since the $(n-1)$th little Schr\"oder number is given by  ${}_2F_1(-n+2,-n+1;2;2)$, it follows that $F(n,n,n)=\Sch{n-1}$.

\subsection{\texorpdfstring{Extendably $\mathbf{132}$-avoiding ASRs}{Extendably 132-avoiding ASRs}}

In this section, we will study the pattern $132$:
\[
\begin{array}{c@{\quad}l}
\begin{array}{c}
1 \\
3 \\
2 \\
\end{array}
&
\left(
\begin{array}{ccc}
1 & 0 & 0 \\
0 & 0 & 1 \\
0 & 1 & 0 \\
\end{array}
\right)
\end{array}
\]

We start %again with the case $1 \le r \le k$ and $d=r$, and 
with the following observation: Let $\pi$ be a permutation of length~$n$. The \emph{complement permutation $\pi^{\comp}$} is defined by $i \mapsto n+1-\pi(i)$. We obtain the permutation matrix of $\pi^{\comp}$ by reflecting the permutation matrix of $\pi$ across a vertical axis. Thus, we have $\s{132}{r}{k}{r} = \s{312}{r}{k}{r}$ if $1 \le r \le k$ since $312^{\comp}=132$. 

The remaining cases of  $1 \le k < r$ and $d=k$ as well as $d < r,k$ will be dealt with together in the following theorem. 

\begin{thm}
    For $d=k < r$ or $d< r, k$, we have 
    \[
    \s{132}{r}{k}{d}=\sum_{i=1}^{d+1} \Sch{i-2} \s{132}{r-i}{k+1-i}{d+1-i}.
    \]
\end{thm}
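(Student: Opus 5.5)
We decompose according to the \emph{first empty row}. Since $d<r$ (this holds in both cases $d=k<r$ and $d<r,k$), every ASR in $\asr{r,k,d}{132}$ has at least one empty row. Let $i$ be the index of the first one. The rows $1,\dots,i-1$ are then nonempty, so $1\le i\le d+1$. The plan is to show that such an ASR is extendably $132$-avoiding exactly when two conditions hold. First, the top $i-1$ rows are supported on the \emph{last} $i-1$ columns $k-i+2,\dots,k$, where they form a $132$-avoiding ASM; there are $\Sch{i-2}$ of these, with the convention $\Sch{-1}=1$ covering $i=1$. Second, the block of rows $i+1,\dots,r$ and columns $1,\dots,k+1-i$ is an arbitrary element of $\asr{r-i,k+1-i,d+1-i}{132}$, and every other entry is $0$. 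Summing over $i$ then gives the recurrence.

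\textbf{Necessity.} In any extension to a square ASM, row $i$ must receive its unique $1$ in some column $c^*>k$, because row $i$ of the ASR is empty. Take any $1$ at position $(a,c)$ with $a<i$. If some $1$ at $(b,c')$ with $b>i$ had $c<c'<c^*$, then the three entries $(a,c)$, $(i,c^*)$, $(b,c')$ would form a $132$. I will use this observation three times.
\begin{itemize}
\item \emph{Columns used by the top rows are completed there.} Suppose a column meeting the top block does not already sum to $1$ within rows $<i$. Then that column needs a further $1$ below row $i$, which lies weakly to the right of the leftmost top $1$. I would check carefully that this really produces a forbidden triple: when the column itself is the leftmost one, use the $1$ immediately above the $-1$ in the same column together with a $1$ to its left in that row, as in \cref{lem:r=d}.
\item \emph{These columns form a full $(i-1)\times(i-1)$ ASM.} By the previous point, the top rows (each summing to $1$) occupy exactly $i-1$ columns, each summing to $1$.
\item \emph{These columns are the rightmost ones.} Any column strictly to the right of the leftmost top $1$ that is not used by the top block must get its $1$ below row $i$, either in the ASR or in its extension. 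Such a $1$ lies strictly between $c$ and $c^*$ (we may take $c^*$ to be the rightmost extension column), which is again forbidden. Hence the used columns are precisely $k-i+2,\dots,k$.
\end{itemize}
Finally, no $1$ below row $i$ can lie in these columns, so the lower part lives in columns $1,\dots,k+1-i$.

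\textbf{Sufficiency.} Here we construct an extension explicitly. Take a $132$-avoiding extension of the lower-left ASR; put the $1$ of row $i$ in the last column of the square; and shift the lower-left extension's added columns so that they sit strictly to the left of the top block's columns. It looks like the extension columns would then need to interleave. I would instead place the extension columns of the lower block just right of column $k$ and the $1$ of row $i$ after them. Then a case check on where the $1$, the $3$ and the $2$ of a putative $132$ can sit finishes the argument:
\begin{itemize}
\item all three in the top block, which is impossible because that block is $132$-avoiding;
\item all three in the lower block together with its extension, which is impossible by the choice of that extension;
\item the $1$ on top and the $2$ below, which is impossible because every lower $1$ is to the left of every top $1$, or, for extension columns, the top $1$ has no $3$ both above the row $i$ part and to the right of it.
\end{itemize}

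\textbf{Main obstacle.} I expect the hardest step to be the first bullet of the necessity part: proving that partial columns with a trailing $-1$ in the top block force a $132$. This step is also where the exact boundary behaviour, namely which extension columns may be used, has to be pinned down.
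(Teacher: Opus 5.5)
Your decomposition by the first empty row $i$ is the paper's (the paper only sketches both directions). Your necessity analysis of the upper block is essentially sound. Two steps, however, do not hold as stated.

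\textbf{The sufficiency construction fails.} You place the appended columns of the lower extension $E_L$ between column $k$ and the column $n$ that holds the $1$ of row $i$. For $i\ge 2$, any $1$ of the top block, the $1$ at $(i,n)$, and any $1$ of $E_L$ in an appended column then form a $132$. This is exactly the configuration your own key observation forbids; your third bullet misses that the $1$ of row $i$ can serve as the $3$. Concretely, take the ASR with rows $(0,1),(0,0),(1,0),(0,0)$ and $E_L=I_2$. Your recipe gives rows $(0,1,0,0),(0,0,0,1),(1,0,0,0),(0,0,1,0)$, which contains $132$ at $(1,2),(2,4),(4,3)$. Yet this ASR is extendably $132$-avoiding, so it must be counted. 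In fact, the $1$ of row $i$ is forced into column $k+1$ when $i\ge 2$.

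Moving it to column $k+1$, with the appended columns after it, is still not enough if $E_L$ is an arbitrary avoiding extension. A top $1$ (or the $1$ at $(i,k+1)$) together with a descent among the $1$s of $E_L$ in appended columns is again a $132$. Such descents do occur: the $2\times 1$ ASR with rows $(0),(1)$ has the $132$-avoiding extension with rows $(0,0,1),(1,0,0),(0,1,0)$. So the extension of the lower block has to be chosen, and your proposal does not do this.

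\textbf{The necessity direction omits the lower block.} You never show that the lower block is extendably $132$-avoiding. Deleting rows $1,\dots,i$ and columns $k-i+2,\dots,k+1$ from an avoiding extension of the whole ASR need not give an ASM, because below row $r$ those columns can carry further $-1,1$ pairs. For example, the ASR with rows $(0),(1)$ is the corner of the $132$-avoiding cross $\chi$, and removing row $1$ and column $2$ of $\chi$ leaves rows $(1,1),(0,0)$.

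\textbf{A canonical extension closes both gaps.} Define $C(A)$ by giving the $t$th empty row of $A$ a $1$ in column $k+t$, giving the $s$th column of $A$ with sum $0$ a $1$ in row $r+s$, and putting zeros elsewhere. Then $A$ extendably avoids $132$ if and only if $C(A)$ avoids $132$. To see this, transfer any $132$ of $C(A)$ to an arbitrary extension:
\begin{itemize}
\item replace an appended $1$ in an empty row by the $1$ every extension has in that row to the right of column $k$;
\item replace an appended $1$ in a zero-sum column by the $1$ every extension has in that column below row $r$;
\item the increasing placement of the appended $1$s rules out every other configuration.
\end{itemize}
For $A$ of your shape, $C(A)$ consists of $M$, the single $1$ at $(i,k+1)$, and a copy of $C(L)$ on the remaining rows and columns. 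Necessity follows because $C(L)$ is a submatrix of $C(A)$. Sufficiency follows because a $132$ in $C(A)$ not contained in $M$ or in $C(L)$ would need a descent among the appended-column $1$s of $C(L)$, and there are none.
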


\begin{proof}
    Let $i$ be the first empty row of an extendably $132$-avoiding ASR. Since there must be a $1$ in this row of any extended ASM, the ASR looks as in \cref{fig:132_d=k}.
    
\begin{figure}[htbp]
\centering
\begin{tikzpicture}[scale=0.5]

\draw[fill=lightgray] (0,0) rectangle (5,7);
\draw[fill=white]  (0,0) rectangle (2,3);
\draw[fill=white]  (2,4) rectangle (5,7);

\node at (-1,3.5) {$i$};
\node at (3.5,5.5) {$M$};
\node at (1,1.5) {$R$};

\end{tikzpicture}
\caption{The structure of an extendably $132$-avoiding ASR with empty rows.}
\label{fig:132_d=k}
\end{figure}

    We see that there is $M \in \asm{i-1}{132}$ in columns $k+2-i$ to $k$ above row~$i$. This gives $\Sch{i-2}$ possibilities.

    Below row~$i$, columns $k+2-i$ to $k$ have to be empty. In the first $k+1-i$ columns, there can be any extendably $132$-avoiding ASR~$R$ with $d+1-i$ nonempty rows, for which there are $\s{132}{r-i}{k+1-i}{d+1-i}$ possibilities.
\end{proof}

\subsection{\texorpdfstring{Extendably $\mathbf{213}$-avoiding ASRs}{Extendably 213-avoiding ASRs}}
In this section, we will study the pattern $213$:
\[
\begin{array}{c@{\quad}l}
\begin{array}{c}
2 \\
1 \\
3 \\
\end{array}
&
\left(
\begin{array}{ccc}
0 & 1 & 0 \\
1 & 0 & 0 \\
0 & 0 & 1 \\
\end{array}
\right)
\end{array}
\]

Again, we start with row-complete ASRs, that is, $1 \le r \le k$ and $r=d$.

\begin{prop}\label{lem:213_r=d}
    For $1\le r\le k$ we have 
    \[
    \s{213}{r}{k}{r}=\s{213}{r}{k-1}{r}+\sum_{i=1}^r \Sch{i-1}\cdot \s{213}{r-i}{k-i}{r-i}.
    \]
\end{prop}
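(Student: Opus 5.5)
The plan is to mimic the proof of \cref{lem:r=d} and decompose a row-complete, extendably $213$-avoiding ASR of size $r\times k$ according to its first column. The first term should come from ASRs whose first column is empty. Given an avoiding extension of the $r\times(k-1)$ ASR obtained by deleting that column, we will re-insert the column and place its $1$ in a new bottom row of the extension. A $1$ in the bottom-left corner of an ASM can play none of the three roles in $213$: it cannot be the leftmost element, which must lie in a middle row, nor the middle or right element, since it sits in the first column. So this extension still avoids $213$.

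Conversely, suppose the first column is empty inside the ASR. In any extension, column~$1$ contains a single $1$, since a $-1$ there would be the leftmost nonzero entry of its row, and that $1$ lies below row~$r$. I intend to show that we can modify the extension so that this $1$ sits in the last row, and then delete that row and column. This gives the bijection with $\asr{r,k-1,r}{213}$. The modification argument is the delicate part of this case, and I would first try to make it a local exchange of two bottom rows of the extension.

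For the remaining terms I will let $i\ge 1$ be the smallest index such that the top-left $i\times i$ submatrix is a full ASM, meaning rows $1,\dots,i$ are supported in columns $1,\dots,i$ and these columns sum to $1$ there. The aim is to show that this block is then a $213$-avoiding ASM, which contributes $\Sch{i-1}$ choices. I also aim to show that rows $1,\dots,i$ are zero outside the block and columns $1,\dots,i$ are zero below it, so that the complementary bottom-right $(r-i)\times(k-i)$ part is an arbitrary element of $\asr{r-i,k-i,r-i}{213}$. This yields $\sum_{i=1}^r \Sch{i-1}\,\s{213}{r-i}{k-i}{r-i}$.

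I expect the main obstacle to be the structural lemma that, when column~$1$ is nonempty, such a splitting index $i$ exists. This is where the geometry of $213$ enters. If $a=(p,c)$ and $b=(q,1)$ are $1$s with $p<q$ and $c>1$, then every later $1$ strictly below row~$q$ must lie in a column to the left of~$c$. I would use this, together with the row sums equal to $1$ and the fact (from the lemma above) that no row or column contains two $-1$s, to argue that the columns touched by rows above the $1$ of column~$1$ are closed off into an initial square block. I would also need to check carefully that no $213$-occurrence can use $1$s from both diagonal blocks, or from the extension rows below row~$r$. If the first-return decomposition fails for this orientation of the pattern, my fallback is to decompose by the position of the $1$ in the first row instead, where the $\Sch{i-1}$ factor should then arise from an $i\times i$ ASM block.
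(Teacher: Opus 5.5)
\textbf{The main decomposition fails for $213$.} Your empty-first-column case is fine in the direction you prove. The main decomposition, however, has the wrong geometry for $213$, and the structural lemma you flag as the main obstacle is false.

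\emph{The top-left block cannot be separated.} Splitting off a full $i\times i$ ASM in the top-left corner is the natural shape for $312$ or $231$, since those patterns cannot straddle a direct sum. It does not work for $213$. Every ASM other than the identity contains a $21$: if it has a $-1$, take the $1$ above it and the $1$ to its left. Any $1$ below and to the right of such a block then completes a $213$. For example, the $2\times 2$ anti-identity followed by a $1$ at $(3,3)$ is the permutation matrix of $213$ itself.

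\emph{Consequences for your count.} Minimality of $i$ forces the block to be a non-identity ASM when $i\ge 2$. Hence your configurations with $2\le i<r$ contribute $0$, not $\Sch{i-1}\,\s{213}{r-i}{k-i}{r-i}$. Minimality also restricts the block to indecomposable ASMs, which are not counted by $\Sch{i-1}$: for $i=2$ only the anti-identity qualifies, not $\Sch{1}=2$ blocks.

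\emph{No splitting index in general.} Many ASRs have no splitting index at all. The matrix $\begin{pmatrix}0&0&1\\1&0&0\end{pmatrix}\in\asr{2,3,2}{213}$ (extend it by a $1$ at $(3,2)$) has a nonempty first column, yet neither $i=1$ nor $i=2$ works. The fallback via the first row is too unspecified to rescue this.

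\textbf{The correct decomposition.} Your observation about $a=(p,c)$ and $b=(q,1)$ is exactly the right tool. It pushes everything above row $q$ to the \emph{right} and everything below to the \emph{left}, so it yields a skew decomposition, not a top-left block. Let the $1$ in column~$1$ sit in row~$i$, and view it as the ``1'' of a potential $213$.
\begin{itemize}
\item If a column $j>1$ vanishes in rows $1,\dots,i$, it receives a $1$ below row~$i$ in every extension. Then no column strictly between $1$ and $j$ may have a $1$ above row~$i$, or that $1$, the $1$ at $(i,1)$, and the lower $1$ would form a $213$.
\item Hence the rows above row~$i$ live in a top-right block. If row~$i$ has no $-1$, this is a $213$-avoiding ASM in the last $i-1$ columns, giving $\Sch{i-2}$ choices.
\item If row~$i$ has a $-1$, the block is $i\times i$ in the last $i$ columns. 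Replacing the $-1$ in its bottom-left corner by $0$ gives a $213$-avoiding ASM with no $1$ there, giving $\Sch{i-1}-\Sch{i-2}$ choices.
\item Rows $i+1,\dots,r$ live in columns $2,\dots,k-i+1$ and form an element of $\asr{r-i,k-i,r-i}{213}$.
\end{itemize}
This is the paper's argument.

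\textbf{The stated range is too large.} The $-1$ subcase needs its column $k-i+1$ to differ from column~$1$. So for $r=k\ge 2$ the term $i=r$ is only $\Sch{r-2}$, and the identity as stated fails. For $r=k=2$ the right-hand side is $0+1+2=3$, while $\s{213}{2}{2}{2}=2$. Only $r<k$ (and $r=k=1$) can be proved.

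\textbf{Reverse direction of the empty-column case.} Rather than exchanging rows of an extension, use the following criterion. A row-complete $R$ extendably avoids $213$ if and only if
\begin{itemize}
\item its $1$s avoid $213$, and
\item no $21$ in $R$ has a column of partial sum $0$ strictly to the right of its upper $1$.
\end{itemize}
Necessity holds because such a column gets a $1$ below row~$r$ in every extension. Sufficiency follows by putting these $1$s into new rows from right to left. This criterion is visibly unchanged by deleting an empty first column.
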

\begin{proof}
    Similarly to the case 312, we consider the first column. If it is empty, the remaining columns constitute an ASR in $\asr{r,k-1,r}{213}$.

    Now assume that there is a $1$ in row~$i$ of the first column. We distinguish two cases according to whether this $1$ is followed by a $-1$ in the same row, as illustrated in \cref{fig:213_d=r}.

\begin{figure}[htbp]
\centering
\begin{subfigure}[c]{.45\textwidth}
\centering
\begin{tikzpicture}[scale=0.5]
    \draw[fill=lightgray] (0,0) rectangle (7,5);

    \draw[fill=white] (1,0) rectangle (5,2);
    \draw[fill=white] (5,3) rectangle (7,5);
    \draw[fill=white] (0,2) rectangle (1,3);

    \node at (.5,2.5) {$1$};
    \node at (-.5,2.5) {$i$};
    \node at (6,4) {$M$};
    \node at (3,1) {$R$};
\end{tikzpicture}
\caption{}
\label{subfig:213_d=r_a}
\end{subfigure}
\begin{subfigure}[c]{.45\textwidth}
\centering
\begin{tikzpicture}[scale=0.5]
    \draw[fill=lightgray] (0,0) rectangle (7,5);

    \draw[fill=white] (1,0) rectangle (5,2);
    \draw[fill=white] (4,2) rectangle (7,5);
    \draw[fill=white] (0,2) rectangle (1,3);
    \draw[fill=white] (4,2) rectangle (5,3);

    \node at (.5,2.5) {$1$};
    \node at (-.5,2.5) {$i$};
    \node at (4.5,2.5) {$-1$};
    \node at (5.5,3.5) {$M$};
    \node at (3,1) {$R$};
\end{tikzpicture}
\caption{}
\label{subfig:213_d=r_b}
\end{subfigure}
\caption{The structure of a row-complete and extendably $213$-avoiding ASR.}
\label{fig:213_d=r}
\end{figure}

If there is a column $j$ with only zeros in the top $i$ rows, then there must be a $1$ further down that column in any extension. Thus, all other columns before must also be empty in the first $i$ rows. As a consequence, if row $i$ comprises only one nonzero entry (\cref{subfig:213_d=r_a}), then there is a $213$-avoiding $M\in\asm{i-1}{213}$ in the last $i-1$ columns, enumerated by $\Sch{i-2}$. If row $i$ contains also a $-1$ (\cref{subfig:213_d=r_b}), then we take the $i \times i$ matrix~$M$ in the top right corner, replace the bottom left $-1$ by $0$ and obtain $\widehat{M}\in\asm{i}{213}$; note that $213$-avoiding ASMs of size $i \times i$ with a $0$ in the bottom left corner are enumerated by $\Sch{i-1} - \Sch{i-2}$.
In both cases, there is a $R \in \asr{r-i,k-i,r-i}{213}$ in columns $2$ to $k-i+1$ and the bottom $r-i$ rows.

Thus, we obtain
\[
    \s{213}{r}{k}{r}=\s{213}{r}{k-1}{r}+\sum_{i=1}^r \left( \Sch{i-2}\cdot \s{213}{r-i}{k-i}{r-i} + \left(\Sch{i-1} - \Sch{i-2}\right)\cdot \s{213}{r-i}{k-i}{r-i} \right),
\]
which concludes the proof.
\end{proof}

Let us now deal with balanced extendably $213$-avoiding ASRs.

\begin{prop} For $2 \leq k < r$, we have
    \begin{multline}\label{eq:balanced213}
    \s{213}{r}{k}{k}=\s{213}{r-1}{k-1}{k-1} + \sum_{i=1}^{r-k+1} \s{213}{r-i}{k-1}{k-1}\\
    + \sum_{i=r-k+2}^{r-1} \Sch{r-i-1} \left( \s{213}{i}{k-r+i}{k-r+i} - \binom{i-1}{k-r+i} \right),
    \end{multline}
    with the additional base case $\s{213}{r}{1}{1}=r$.
\end{prop}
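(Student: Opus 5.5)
The plan is to mimic the proof of \cref{lem:213_r=d} and decompose a balanced extendably $213$-avoiding ASR $A$ of size $r\times k$ according to its first column. This is more than a guess about the shape of the recurrence, but I have not checked that the cases split exactly as listed below.

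Two facts fix the framework. First, $A$ is balanced with $d=k<r$, so every column already sums to $1$. Hence any square extension is $r\times r$: it adds exactly $r-k$ new columns on the right and no new rows. The $r-k$ empty rows of $A$ must each receive their single $1$ in these new columns. Second, the new columns cannot carry any $-1$s, because every row of $A$ that has an entry in them must be one of the empty rows, which sums to $1$ there. So the extension is a choice of a permutation-like placement of $1$s for the empty rows into the new columns. For $213$-avoidance this placement can always be taken decreasing (an anti-diagonal pattern from top-right to bottom-left), since that choice creates the fewest $21$-pairs followed by a larger entry. I would first prove this normalization lemma, so that ``extendably avoids'' becomes ``a fixed canonical extension avoids''.

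Next, let $i$ be the row of the topmost $1$ in column~$1$. As in \cref{lem:213_r=d}, every $1$ strictly above row~$i$ must lie weakly to the right of every $1$ strictly below and to the right of $(i,1)$; otherwise we get a $213$ with the $1$ at $(i,1)$ playing the role of ``$1$''. The cases are then as follows.

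\emph{Case 1: row~$i$ contains a $-1$.} I expect this to produce, after deleting row~$i$ and column~$1$ and merging as in the $312$ argument, the term $\s{213}{r-1}{k-1}{k-1}$.

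\emph{Case 2: the rows $1,\dots,i-1$ above the $1$ are all empty.} This forces $i-1\le r-k$. By the normalization their $1$s occupy the last $i-1$ new columns in a determined way. Deleting rows $1,\dots,i$ and column~$1$ leaves an arbitrary balanced $(r-i)\times(k-1)$ ASR, which gives $\sum_{i=1}^{r-k+1}\s{213}{r-i}{k-1}{k-1}$.

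\emph{Case 3: some row above $i$ is nonempty.} Then the region above row~$i$, together with the rightmost columns including the new ones, must form a full $213$-avoiding ASM of size $r-i$. That ASM is counted by $\Sch{r-i-1}$ after the identification used in \cref{lem:213_r=d}, so this time I would index by $r-i$. The complementary lower-left block then has $i$ rows and $k-r+i$ columns and is a balanced extendably $213$-avoiding ASR. We must subtract those lower-left blocks whose first row is empty, since that configuration was already counted in Case~2. Exactly as in the proofs of \cref{lem:k=d} and \cref{lem:general}, such ASRs are rigid: once the nonempty rows among the remaining $i-1$ are chosen, the $1$s are forced to be decreasing. This accounts for the correction $\binom{i-1}{k-r+i}$.

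The base case $\s{213}{r}{1}{1}=r$ is immediate, since a single column holding one $1$ in any of the $r$ rows always extends by the decreasing canonical filling.

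The main obstacle is making the boundaries between the three cases exact. In particular, I need to justify that the block in Case~3 really is a full square ASM, including the new columns, and that the subtraction term removes precisely the overlap with Case~2. If the indexing turns out to be off, I would first try decomposing by the last row instead of the first column, and compare small values ($r\le 5$) against the recurrence to pin down the correct case split.
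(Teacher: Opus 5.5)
Your decomposition by the $1$ in the first column is the right one, and your Case~2 is correct. However, the counts you assign to Cases~1 and~3 are false, so the proof cannot go through as planned.

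Take $r=3$, $k=2$. A balanced $3\times 2$ ASR contains no $-1$, since a $-1$ needs a $1$ on each side in its row. So Case~1 is empty, whereas you assign it $\s{213}{2}{1}{1}=2$. Conversely, the two matrices with the $1$ of column~$1$ in row~$3$ and the $1$ of column~$2$ in row~$1$ or~$2$ lie in Case~3, but your claimed count $\sum_{i=3}^{2}(\cdots)$ for that case is empty. Likewise, for $r=4$, $k=3$ the $-1$ case has $2$ elements, not $\s{213}{3}{2}{2}=5$.

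The geometry in Case~3 is also inconsistent. There are only $i-1$ rows above row~$i$, so they cannot carry an ASM of size $r-i$. And the block below row~$i$ has $r-i$ rows, not $i$.

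What is true is the following. If some row above row~$i$ is nonempty, then every row below row~$i$ is nonempty. Otherwise that empty row would receive a $1$ in a new column, which plays ``3'' against a $1$ above row~$i$ and the $1$ at $(i,1)$. Hence rows $i+1,\dots,r$ and columns $2,\dots,r-i+1$ carry a $213$-avoiding ASM of size $r-i$; this is where $\Sch{r-i-1}$ comes from. The upper right corner carries a balanced ASR.

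Now split by whether row~$i$ contains a $-1$.
\begin{itemize}
\item Without a $-1$, you get $\Sch{r-i-1}\,\s{213}{i-1}{k-r+i-1}{k-r+i-1}$ for $r-k+2\le i\le r$.
\item With a $-1$, it sits necessarily at $(i,r-i+1)$, so $i\le r-1$. Replacing it by $0$ yields an element of $\asr{i,k-r+i,k-r+i}{213}$ with no $1$ in its bottom left corner and with a nonempty bottom row. This gives $\Sch{r-i-1}\bigl(\s{213}{i}{k-r+i}{k-r+i}-\s{213}{i-1}{k-r+i-1}{k-r+i-1}-\binom{i-1}{k-r+i}\bigr)$.
\end{itemize}
So the binomial removes blocks whose bottom row is empty, which is impossible because row~$i$ has a $1$ after its $-1$. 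It is not an overlap with Case~2.

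The middle terms cancel against the no-$-1$ terms. The only survivor is $i=r$, namely $\Sch{-1}\,\s{213}{r-1}{k-1}{k-1}$, and that is the origin of the first summand. This telescoping is the missing idea. The three summands of the recurrence are not the sizes of three natural cases, so your case split cannot produce them term by term.
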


\begin{proof}
    Let us assume $1 \leq k < r$ and consider a balanced extendably $213$-avoiding ASR of size $r \times k$ with $k$ nonempty rows. Since there are no empty columns, there is a unique $1$ in the first column, say in row~$i$. Now assume that all rows above are empty, see \cref{subfig:213_k=d_a}.
    
\begin{figure}[htbp]
\centering
\begin{subfigure}[c]{.3\textwidth}
\centering   
\begin{tikzpicture}[scale=0.5]

\draw[fill=lightgray] (0,0) rectangle (5,7);
\draw[fill=white] (0,4) rectangle (1,5);
\draw[fill=white] (1,0) rectangle (5,4);

\node at (-1,4.5) {$i$};
\node at (.5,4.5) {$1$};
\node at (3,2) {$R$};

\end{tikzpicture}
\caption{}
\label{subfig:213_k=d_a}
\end{subfigure}
\begin{subfigure}[c]{.3\textwidth}
\centering    
\begin{tikzpicture}[scale=0.5]

\draw[fill=lightgray] (0,0) rectangle (5,7);
\draw[fill=white] (0,2) rectangle (1,3);
\draw[fill=white] (1,0) rectangle (3,2);
\draw[fill=white] (3,3) rectangle (5,7);

\node at (-1,2.5) {$i$};
\node at (.5,2.5) {$1$};
\node at (4,5) {$R$};
\node at (2,1) {$M$};

\end{tikzpicture}
\caption{}
\label{subfig:213_k=d_b}
\end{subfigure}
\begin{subfigure}[c]{.3\textwidth}
\centering    
\begin{tikzpicture}[scale=0.5]

\draw[fill=lightgray] (0,0) rectangle (5,7);
\draw[fill=white] (0,2) rectangle (1,3);
\draw[fill=white] (1,0) rectangle (3,2);
\draw[fill=white] (2,2) rectangle (5,7);
\draw (2,2) rectangle (3,3);

\node at (-1,2.5) {$i$};
\node at (.5,2.5) {$1$};
\node at (2.5,2.5) {$-1$};
\node at (3.5,4.5) {$R$};
\node at (2,1) {$M$};

\end{tikzpicture}
\caption{}
\label{subfig:213_k=d_c}
\end{subfigure}
\caption{The structure of a balanced extendably $213$-avoiding ASR.}
\label{fig:213_k=d}
\end{figure}
    
    Thus, the rows below form an extendably $213$-avoiding ASR $R$ of size $(r-i) \times (k-1)$ with $k-1$ nonempty rows. Note that the $1$ in the first column can be at most in row~$r-k+1$ to make sure $R$ has at least $k-1$ rows. This gives 
    \begin{equation*}
    \sum_{i=1}^{r-k+1} \s{213}{r-i}{k-1}{k-1}
    \end{equation*}
    possibilities.

    If the rows above row~$i$ are not empty, then the extendably $213$-avoiding ASR has one of the forms in \cref{subfig:213_k=d_b} or \cref{subfig:213_k=d_c}.

If there is no $-1$ in row~$i$ (\cref{subfig:213_k=d_b}), then there is an $M\in \asm{r-i}{213}$ in rows $i+1$ to $r$ and columns $2$ to $r-i+1$. The reason is that an empty row below row~$i$ would create a $213$-pattern in the extended ASM. Consequently, $i$ ranges between $r-k+2$ and $r$. Above row~$i$, we have $R \in \asr{i-1,k-r+i-1,k-r+i-1}{213}$ in the top right corner. Thus, there are
\begin{equation*}
    \sum_{i=r-k+2}^{r} \Sch{r-i-1} \s{213}{i-1}{k-r+i-1}{k-r+i-1}
\end{equation*}
possibilities in total. Recall that we defined $\Sch{-1}=1$ for that purpose.

If there is a $-1$ in row~$i$ (\cref{subfig:213_k=d_c}), we again have a $213$-avoiding ASM~$M$ in rows $i+1$ to $r$ and columns $2$ to $r-i+1$. In the top right corner, we take the $i \times (k-r+i)$ matrix~$R$, replace the $-1$ at posititon~$(i,r-i+1)$ by $0$, and obtain $\widehat{R}\in\asr{i,k-r+i,k-r+i}{213}$. Since nonempty rows sum to $1$, we know there is somewhere a $1$ in row~$i$ in the last $k-r+i-1$ columns. The ASRs that have a $1$ in the bottom left corner are counted by $\s{213}{i-1}{k-r+i-1}{k-r+i-1}$ and those which have an empty bottom row by $\binom{i-1}{k-r+i}$ since they cannot have any $21$-pattern in the $i-1$ rows above. In total, we obtain
\begin{equation*}
    \sum_{i=r-k+2}^{r-1} \Sch{r-i-1} \left( \s{213}{i}{k-r+i}{k-r+i} - \s{213}{i-1}{k-r+i-1}{k-r+i-1} - \binom{i-1}{k-r+i} \right)
\end{equation*}
possibilities in that case. Note that here $i$ can be at most $r-1$ in order to ensure the right number of nonempty rows.

We finally add the three previous sums together. If $k \ge 2$, this sum simplifies to  \eqref{eq:balanced213}; if $k = 1$, we obtain $\s{213}{r}{1}{1}=r$.
\end{proof}

In the general case $d < r, k$, we argue similarly as before.

\begin{thm} For $d < r, k$, we have
    \begin{multline*}
    \s{213}{r}{k}{d}=
    \s{213}{r}{k-1}{d} + \sum_{i=1}^{r-d+1} \s{213}{r-i}{k-1}{d-1}\\
    + \sum_{i=r-d+2}^{r} \s{213}{r-i}{k-d+r-i}{r-i} \left( \s{213}{i}{d-r+i}{d-r+i} - \binom{i-1}{d-r+i} \right).
    \end{multline*}
\end{thm}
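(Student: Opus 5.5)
We follow the first-column decomposition used for \cref{lem:213_r=d} and the balanced case, but now allow empty rows. Fix an ASR in $\asr{r,k,d}{213}$ with $d<r,k$ and look at its first column.

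\textbf{Step 1: empty first column.} If the first column is empty, deleting it gives an element of $\asr{r,k-1,d}{213}$. Conversely, any such ASR can be padded with an empty first column: in an avoiding extension of the smaller ASR we insert a new first column whose $1$ sits in a new bottom row, and a $1$ in the bottom-left corner cannot be part of a $213$ occurrence. This yields the term $\s{213}{r}{k-1}{d}$.

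\textbf{Step 2: the $1$ in the first column has only empty rows above it.} Say the $1$ sits in row~$i$ and rows $1,\dots,i-1$ are empty. As in \cref{subfig:213_k=d_a}, row~$i$ cannot contain a $-1$, since the column above such a $-1$ is empty. So the rest of row~$i$ is $0$, and the rows below, restricted to columns $2,\dots,k$, form an element of $\asr{r-i,k-1,d-1}{213}$. For the converse we must check that the empty rows above can always be filled in an extension without creating $213$. The extension $1$s for the empty top rows can be placed far to the right in decreasing order (a $21$-arrangement) to the right of everything. I would verify this by the same reasoning that gave $\s{213}{r-i}{k-1}{k-1}$ in the balanced proof. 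Since the lower part needs $r-i\ge d-1$, this gives $\sum_{i=1}^{r-d+1}\s{213}{r-i}{k-1}{d-1}$.

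\textbf{Step 3: some row above row~$i$ is nonempty.} This is the main obstacle. By the argument in the balanced proof, every row below row~$i$ must be nonempty, since an empty row below would receive a $1$ to the right in the extension and form a $213$ with the $1$ at $(i,1)$ and a nonzero row above. Hence the $r-i$ rows below are all nonempty, and they occupy a block of columns $2,\dots$ adjacent to the first column. Unlike the balanced case, this block need not be a square ASM, because columns may still be left unfilled. I expect it to be a row-complete ASR of size $(r-i)\times(k-d+r-i)$: all $r-i$ rows are nonempty, and the extra $k-d$ columns are forced into this lower-left region, since the top-right part must have exactly as many columns as its nonempty rows. The top-right part consists of rows $1,\dots,i$ and the last $d-r+i$ columns. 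Reasoning exactly as in the two subcases \cref{subfig:213_k=d_b} and \cref{subfig:213_k=d_c} (with or without a $-1$ in row~$i$, replacing that $-1$ by $0$), it is a balanced ASR $\widehat R\in\asr{i,d-r+i,d-r+i}{213}$. The constraint is that $\widehat R$ cannot have an empty bottom row unless a $1$ is present. Combining the two subcases should telescope, as in the balanced proof: the no-$-1$ case contributes the configurations with a $1$ in the bottom-left corner, and the $-1$ case contributes the remaining ones minus those with an empty bottom row. By the $21$-free argument, the latter are counted by $\binom{i-1}{d-r+i}$. This gives the factor $\s{213}{i}{d-r+i}{d-r+i}-\binom{i-1}{d-r+i}$, multiplied by the count $\s{213}{r-i}{k-d+r-i}{r-i}$ for the lower block, summed over $i=r-d+2,\dots,r$.

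The delicate points are justifying the column split between the lower-left block and $\widehat R$, which needs a careful extension argument, and checking the boundary indices: the values $i=r$ and $i=r-d+1$, and the convention $\s{213}{0}{\cdot}{0}=1$.
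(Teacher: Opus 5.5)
Your proposal is correct and follows essentially the same approach as the paper. It uses the same three-way split on the first column, and your merging of the two Step~3 subcases (a $1$ in the bottom-left corner versus a $-1$ replaced by $0$) is exactly how the paper's two sums telescope into $\s{213}{i}{d-r+i}{d-r+i}-\binom{i-1}{d-r+i}$. The paper also asserts the column split you flag without detail; it follows by letting $m$ be the leftmost column with a $1$ above row~$i$: any $1$ below row~$i$ to the right of column~$m$, whether in the ASR or forced in the extension by a column summing to $0$, forms a $213$ together with that $1$ and the $1$ at $(i,1)$.
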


\begin{proof}
Consider an extendably $213$-avoiding ASR of size $r \times k$ with $d$ nonempty rows. We have $\s{213}{r}{k-1}{d}$ possibilities for the first column to be empty because we can always extend a $213$-avoiding ASM of size $n \times n$ by a $1$ in the bottom left corner to obtain a $213$-avoiding ASM of size $(n+1) \times (n+1)$.

Now assume that the first column is not empty and that there is a $1$ in row~$i$. If the rows above row~$i$ are empty, we have $\s{213}{r-i}{k-1}{d-1}$ possibilities to fill the rows below. To that end, the $1$ in the first column cannot be below row $r-d+1$.

If the rows above row~$i$ are not empty, we again distinguish between two cases, depending on whether row~$i$ contains a $-1$, see \cref{fig:213_gen}.

\begin{figure}[htbp]
\centering
    \begin{subfigure}[c]{.45\textwidth}
    \centering
    \begin{tikzpicture}[scale=0.5]
    \draw[fill=lightgray] (0,0) rectangle (6,7);
    \draw[fill=white] (0,2) rectangle (1,3);
    \draw[fill=white] (1,0) rectangle (4,2);
    \draw[fill=white] (4,3) rectangle (6,7);

    \node at (-1,2.5) {$i$};
    \node at (.5,2.5) {$1$};
    \node at (5,5) {$R_2$};
    \node at (2.5,1) {$R_1$};
    \end{tikzpicture}
    \caption{}
    \label{subfig:213_gen_a}
    \end{subfigure}
    \begin{subfigure}[c]{.45\textwidth}
    \centering
    \begin{tikzpicture}[scale=0.5]
    \draw[fill=lightgray] (0,0) rectangle (6,7);
    \draw[fill=white] (0,2) rectangle (1,3);
    \draw[fill=white] (1,0) rectangle (4,2);
    \draw[fill=white] (3,2) rectangle (6,7);
    \draw (3,2) rectangle (4,3);

    \node at (-1,2.5) {$i$};
    \node at (.5,2.5) {$1$};
    \node at (3.5,2.5) {$-1$};
    \node at (4.5,4.5) {$R_2$};
    \node at (2.5,1) {$R_1$};
    \end{tikzpicture}
    \caption{}
    \label{subfig:213_gen_b}
    \end{subfigure}
\caption{The structure of an extendably $213$-avoiding ASR.}
\label{fig:213_gen}
\end{figure}

In both cases, $R_1$ is a row-complete, extendably $213$-avoiding ASR and $R_2$ is a balanced extendably $213$-avoiding ASR (after possibly replacing the $-1$ in the bottom left corner by a $0$). This implies that $i$ has to be at least $r-d+2$.

If row~$i$ does not contain a $-1$ (\cref{subfig:213_gen_a}), then $R_1$ is of size $(r-i) \times (k-d+r-i)$ 
and $R_2$ is of size $(i-1) \times (d-r+i-1)$, which gives
\begin{equation*}
    \sum_{i=r-d+2}^{r} \s{213}{r-i}{k-d+r-i}{r-i} \cdot \s{213}{i-1}{d-r+i-1}{d-r+i-1}
\end{equation*}
possibilities.

On the other hand, if row~$i$ contains a $-1$ (\cref{subfig:213_gen_b}), $R_1$ is again of size $(r-i) \times (k-d+r-i)$. Furthermore, $R_2$ is of size $i \times (d-r+i)$. Since we have to replace the $-1$ by a $0$ in order to obtain a proper extendably $213$-avoiding ASR, we have to count all ASRs of size $i \times (d-r+i)$ that do not have a $1$ in the bottom left corner and that do not have an empty bottom row. In total, we get
\begin{equation*}
    \sum_{i=r-d+2}^{r} \s{213}{r-i}{k-d+r-i}{r-i} \left( \s{213}{i}{d-r+i}{d-r+i} - \s{213}{i-1}{d-r+i-1}{d-r+i-1} - \binom{i-1}{d-r+i} \right)
\end{equation*}
possibilities in that case.

By summing all possibilities, we again obtain the desired identity. 
\end{proof}

\subsection{\texorpdfstring{Extendably $\mathbf{231}$-avoiding ASRs}{Extendably 231-avoiding ASRs}}

In this section, we will study the pattern~$231$:
\[
\begin{array}{c@{\quad}l}
\begin{array}{c}
2 \\
3 \\
1 \\
\end{array}
&
\left(
\begin{array}{ccc}
0 & 1 & 0 \\
0 & 0 & 1 \\
1 & 0 & 0 \\
\end{array}
\right)
\end{array}
\]

We first observe that, by reflecting across a vertical axis, it follows that $\s{231}{r}{k}{r} = \s{213}{r}{k}{r}$ if $1 \le r \le k$. The other two cases we need to consider, namely $d=k < r$ and $d < r, k$, are very similar to the case of the pattern~$132$.

\begin{thm} For $d=k < r$ or $d < r, k$, we have
    \[
    \s{231}{r}{k}{d}=\sum_{i=1}^{d+1} \Sch{i-2} \s{231}{r-i}{k-i+1}{d-i+1}.
    \]
\end{thm}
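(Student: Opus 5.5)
We follow the proof of the corresponding statement for $132$. In both regimes $d=k<r$ and $d<r,k$ the ASR has at least one empty row. Indeed, if $d=k<r$, then $d<r$ forces an empty row; in the case $d<r,k$ the same reason applies. So we decompose according to the first empty row, say row~$i$, with $1\le i\le d+1$, since rows $1,\dots,i-1$ are all nonempty. Any extension to a square ASM must place a $1$ in row~$i$, and that $1$ lies in a column to the right of column~$k$. This entry acts as the ``$3$'' (for $231$ the pattern $0\,1\,0/0\,0\,1/1\,0\,0$ has its largest column entry in the middle row). It therefore forbids the configuration consisting of a $1$ above row~$i$ in some column $c$ together with a $1$ below row~$i$ in a column $c'<c$.

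The first step is to show that the top $i-1$ rows restricted to some block of columns form a full $(i-1)\times(i-1)$ ASM, and that this block sits in the last $i-1$ columns, columns $k+2-i$ to $k$. We reuse the argument from the $132$ case. Suppose a column is empty in the top $i-1$ rows but has a nonempty column to its right there. Then the empty column needs a $1$ below row $i$, either in the ASR or in the extension. That $1$, together with the $1$ above row~$i$ in the right column and the extension's $1$ in row~$i$, produces $231$. The same happens if a column's bottommost nonzero entry in the top $i-1$ rows is a $-1$, since that column then needs a further $1$ below. Hence the nonempty columns in the top block are consecutive, end at column $k$, and each has column sum $1$. Since there are $i-1$ row sums of $1$, there are exactly $i-1$ such columns. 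The block is therefore an ASM, it must avoid $231$, and there are $\Sch{i-2}$ choices.

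Next, we check that below row~$i$ the columns $k+2-i,\dots,k$ must be empty. A $1$ there would sit in a column of the top ASM, and placing it below would either break the alternation (the column already sums to $1$, so the next nonzero entry must be $-1$, which needs yet another $1$) or create a $231$ with a top $1$ and the extension's $1$ in row~$i$. The remaining region, rows $i+1,\dots,r$ and columns $1,\dots,k+1-i$, is then an ASR with $d+1-i$ nonempty rows. It must extendably avoid $231$, and conversely any such ASR combined with the top ASM gives an extendably avoiding ASR. For the converse we take a $231$-avoiding extension of the lower ASR and interleave it: the extension's row-$i$ $1$ goes to the far right, and the lower part's extension columns are placed further right. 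We then check that no $231$ occurrence straddles the blocks, because every entry of the top block lies weakly to the right of and above every entry of the bottom block. This yields $\s{231}{r-i}{k-i+1}{d-i+1}$, and summing over $i$ gives the formula.

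The main obstacle is this converse gluing step. We have to argue carefully that the extension columns of the lower ASR, placed to the right of column $k$ and hence to the right of the top ASM, cannot combine with a top $1$ and a lower $1$ to form $231$. The natural fix is to order the new columns as in the $132$ argument: the row-$i$ $1$ goes in the leftmost new column, and the lower ASR's extension columns go after it. We then verify the three possible mixed triples directly.
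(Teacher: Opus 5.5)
\textbf{There is a genuine gap: you carried over the geometry of the $132$ case without mirroring it.} You correctly note that the extension's $1$ in row~$i$, which lies in a column $>k$, forbids a $1$ above row~$i$ in column $c$ together with a $1$ below row~$i$ in a column $c'<c$. But the block structure you then assert is exactly this forbidden configuration: the top ASM in columns $k+2-i,\dots,k$ and the lower ASR in columns $1,\dots,k+1-i$.

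As soon as $2\le i\le k$, columns $1,\dots,k+1-i$ are empty above row~$i$, so each needs a $1$ below row~$i$, in the ASR or in its extension. Such a $1$, together with any top $1$ and the row-$i$ $1$ of the extension, forms a $231$. For instance, with $r=3$, $k=d=2$, $i=2$, your shape is $\begin{pmatrix}0&1\\0&0\\1&0\end{pmatrix}$, which extendably contains $231$. The ASR that actually contributes the term $\Sch{0}\,\s{231}{1}{1}{1}=1$ is $\begin{pmatrix}1&0\\0&0\\0&1\end{pmatrix}$.

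So your claim that any top ASM combined with any admissible lower ASR gives an extendably avoiding ASR is false, and your correspondence is onto the wrong set. The formula comes out right only because the block sizes coincide with those of the correct decomposition. In fact, your own empty-column argument proves the opposite of your conclusion. It shows that a column empty in the top $i-1$ rows cannot have a nonempty column to its \emph{right*}, so the nonempty top columns form an initial segment. The correct picture, as in the paper, is:
\begin{itemize}
\item $M\in\asm{i-1}{231}$ in rows and columns $1,\dots,i-1$;
\item columns $1,\dots,i-1$ empty below row~$i$;
\item the lower ASR in rows $i+1,\dots,r$ and columns $i,\dots,k$.
\end{itemize}

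\textbf{Your gluing step also needs correcting.} Your final choice puts the row-$i$ $1$ in the leftmost new column $k+1$ and the lower ASR's extension columns after it. This can create $231$ even when $i=1$. For $\begin{pmatrix}0\\0\\1\end{pmatrix}$ we have $R=\begin{pmatrix}0\\1\end{pmatrix}$, whose only extension is $\begin{pmatrix}0&1\\1&0\end{pmatrix}$, and your recipe produces the permutation matrix of $231$ itself. Your first instinct, ``far right'', was the right one.

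Take $M$ in the top left and a single $1$ at position $(i,n)$. Place a $231$-avoiding extension $E_R$ of $R$ in rows $i+1,\dots,n$ and columns $i,\dots,n-1$. This is a direct sum of ASMs, and its top-left $r\times k$ corner is the given ASR. No $231$ can mix blocks, for two reasons. First, the $1$ at $(i,n)$ could only serve as the top entry of a $231$, which requires a $1$ further to the right. Second, $M$ lies above and to the left of every other nonzero entry, whereas in $231$ the bottom $1$ lies to the left of the top one.

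With the geometry mirrored and this choice of column, your argument becomes the paper's proof.
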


\begin{proof}
    Assume that row~$i$ is the first empty row of an extendably $231$-avoiding ASR of size $r \times k$. Since there will be a $1$ somewhere in row~$i$ of any extension, the ASR looks as in \cref{fig:231_d=k}.

\begin{figure}[htbp]
\centering
\begin{tikzpicture}[scale=0.5]

\draw[fill=lightgray] (0,0) rectangle (5,7);
\draw[fill=white]  (3,0) rectangle (5,3);
\draw[fill=white]  (0,4) rectangle (3,7);

\node at (-1,3.5) {$i$};
\node at (1.5,5.5) {$M$};
\node at (4,1.5) {$R$};

\end{tikzpicture}
\caption{The structure of an extendably $231$-avoiding ASR with empty rows.}
\label{fig:231_d=k}
\end{figure}

The top left corner $M$ is a $231$-avoiding ASM of size $(i-1) \times (i-1)$ and $R$ is an extendably $231$-avoiding ASR of size $(r-i) \times (k-i+1)$ with $d-i+1$ nonempty rows. The assertion then follows immediately.
\end{proof}

\section{Bijections and combinatorial interpretations}\label{sec:bijections}

In this section, we present closed-form expressions for some special cases and provide bijections with classical families of lattice paths, including refined Schr\"oder numbers as well as Schr\"oder ballot numbers, Delannoy paths that do not cross the main diagonal vertically, and little Schr\"oder paths. 

\subsection{\texorpdfstring{$312$-avoiding ASRs and refined and generalized Schröder numbers}{312-avoiding ASRs and refined and generalized Schröder numbers}}

For $j\ge 2, i\ge 1$, let $a_{i,j}$ denote the number of Schröder paths from $(0,0)$ to $(i,i)$ that touch the diagonal exactly $j$ times. Note that any Schröder path touches the diagonal at the starting and ending point, therefore $j\ge 2$ and $\Sch{i}=\sum_{j=2}^{i+1} a_{i,j}$. For example, $a_{i,i+1}=2^i$ for $i \ge 1$, $a_{i,2}=\Sch{i-1}$ for $i \ge 2$, and $a_{i,j} = 0$ if $j > i+1$. We also have the following recursion by summing over the first point of return to the diagonal $(m,m)$; note the special case if $m=1$:
\begin{equation} \label{Sch_refinement}
    a_{i,j}=2a_{i-1,j-1} +\sum_{m=2}^{i}\Sch{m-1}\cdot a_{i-m,j-1}
    =a_{i-1,j-1} +\sum_{m=1}^{i}\Sch{m-1}\cdot a_{i-m,j-1}.
\end{equation}

The refined Schr\"oder numbers $a_{i,j}$ have been studied before, see \cite[A108891]{oeis}. They explicitily appear in \cite[Proposition 6.6]{BMO25}, although they seem to have already appeared earlier (in disguise), for example in \cite{PS98}.

We can link the refined Schr\"oder numbers $a_{i,j}$ to row-complete, extendably $312$-avoiding ASRs through the following theorem.

\begin{thm}\label{thm:r=d}
    For $2 \le r \le k$, we have
    \begin{equation}\label{eq:thm_r=d}
    \s{312}{r}{k}{r}=\sum_{j=2}^{r} a_{r-1,j}\binom{k-r+j}{j}.
    \end{equation}
\end{thm}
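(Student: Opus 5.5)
The approach is induction on $r$, using the recursion of \cref{lem:r=d} together with the first-return decomposition \eqref{Sch_refinement} of the numbers $a_{i,j}$. Write $T(r,k)$ for the right-hand side of \eqref{eq:thm_r=d}. The plan has three steps. First, fix the boundary data. Second, show that $T$ satisfies the same recursion as $\s{312}{r}{k}{r}$. Third, compare the two in the range $r\le k$.

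\textbf{Boundary data.} For $k=r$ we have $\binom{j}{j}=1$, so $T(r,r)=\sum_j a_{r-1,j}=\Sch{r-1}=\s{312}{r}{r}{r}$. The small cases $r=0,1$ use $\s{312}{1}{k}{1}=k$. To make the recursion close up, it is convenient to extend $T$ to $r=1$ and $r=0$ by $T(1,k)=k$ and $T(0,k)=1$. This amounts to declaring $a_{0,1}=1$ and using the term $j=1$, since $\binom{k}{1}=k$; it should be checked that this convention is consistent with \eqref{Sch_refinement}.

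\textbf{Matching the recursion.} Rewrite \cref{lem:r=d} with $m=i-1$ as
\[
\s{312}{r}{k}{r}-\s{312}{r}{k-1}{r}=\sum_{m=0}^{r-1}\Sch{m-1}\,\s{312}{r-m}{k-m}{r-m},
\]
where the term $m=0$ gives $\s{312}{r-1}{k-1}{r-1}$ because $\Sch{-1}=1$. On the $T$ side, Pascal's rule gives
\[
T(r,k)-T(r,k-1)=\sum_j a_{r-1,j}\binom{k-r+j-1}{j-1}.
\]
Now expand $a_{r-1,j}$ by the second form of \eqref{Sch_refinement}, $a_{i,j}=a_{i-1,j-1}+\sum_{m\ge1}\Sch{m-1}a_{i-m,j-1}$. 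Substituting $j'=j-1$ turns $\binom{k-r+j-1}{j-1}$ into $\binom{(k-1)-(r-1)+j'}{j'}$, so the piece $a_{r-2,j-1}$ reproduces $T(r-1,k-1)$. The piece $\Sch{m-1}a_{r-1-m,j-1}$ gives
\[
\sum_{j'}a_{r-1-m,j'}\binom{(k-m)-(r-m)+j'}{j'}=T(r-m,k-m).
\]
Hence
\[
T(r,k)-T(r,k-1)=T(r-1,k-1)+\sum_{m=1}^{r-1}\Sch{m-1}T(r-m,k-m),
\]
which is exactly the shape of the recursion above. The index $i=m+1$ in \cref{lem:r=d} runs over $2,\dots,r$, and this matches $m=1,\dots,r-1$. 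The terms $m=r-1$, with $a_{0,\cdot}$, are the ones where the $j=1$ convention matters.

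\textbf{Conclusion and main obstacle.} The identity then follows by induction on $r$, with an inner induction on $k\ge r$ starting from $k=r$. A double induction is needed here, because at $k=r$ the recursion refers to $T(r,r-1)$, which lies outside the claimed range. For that reason I would avoid using the recursion at $k=r$ and instead take the base case $k=r$ directly from the boundary data above.

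I expect the main obstacle to be the boundary terms: the small values $a_{0,j}$, $a_{1,j}$, the summation range $j\ge2$ versus $j\ge1$, and the special role of $m=1$ in the first form of \eqref{Sch_refinement}. Concretely, I will first verify the identity for $r=2$ by hand. In that case $a_{1,2}=2$ and $\binom{k}{2}\cdot2=k(k-1)$, and this should agree with \cref{lem:r=d}: $\s{312}{2}{k}{2}=\s{312}{2}{k-1}{2}+(k-1)+k$, which gives $k(k-1)$ with $\s{312}{2}{2}{2}=2$. After this check I will adjust the convention for $a_{0,\cdot}$ until the telescoping is exact.
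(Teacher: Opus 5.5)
Your proposal is correct and follows essentially the paper's argument: induction on $r$, comparing $T(r,k)-T(r,k-1)$ (via Pascal's rule) with the increment given by \cref{lem:r=d} and regrouping with \eqref{Sch_refinement}. Your convention $a_{0,1}=1$, $a_{i,1}=0$ for $i\ge1$ is consistent, since at $j=2$ it reproduces $a_{1,2}=2$ and $a_{i,2}=\Sch{i-1}$, and it simply absorbs the boundary term $\Sch{r-2}(k-r+1)$ that the paper splits off by hand. Two slips need fixing: in your displayed rewriting the $m=0$ term is $\Sch{-1}\s{312}{r}{k}{r}$, not $\s{312}{r-1}{k-1}{r-1}$ (the correct form is the one you state later, $T(r-1,k-1)+\sum_{m=1}^{r-1}\Sch{m-1}T(r-m,k-m)$), and in your $r=2$ check the increment is $2(k-1)$, not $(k-1)+k$.
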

\begin{proof}
    This identity is proven by induction on $r$. For the base case, \cref{lem:r=d} implies $\s{312}{1}{k}{1}=k$ and consequently $\s{312}{2}{k}{2}=(k-1)k$, which coincides with
    \[
    \sum_{j=2}^2 a_{1,j}\binom{k-2+j}{j}= a_{1,2}\binom{k}{2} = (k-1)k.
    \]
    For the induction step, we assume $r \ge 3$ and deduce from \cref{lem:r=d} that
    \begin{multline*}
        \s{312}{r}{k}{r} - \s{312}{r}{k-1}{r} = \s{312}{r-1}{k-1}{r-1} + \sum_{i=2}^{r} \Sch{i-2} \cdot \s{312}{r+1-i}{k+1-i}{r+1-i}\\
        = \sum_{j=2}^{r-1} a_{r-2,j} \binom{k-r+j}{j} + \sum_{i=2}^{r-1} \Sch{i-2} \sum_{j=2}^{r+1-i} a_{r-i,j}\binom{k-r+j}{j} + \Sch{r-2} (k-r+1)\\
        = \sum_{j=2}^{r-1} \binom{k-r+j}{j} \left( a_{r-2,j} + \sum_{i=2}^{r+1-j} \Sch{i-2} a_{r-i,j} \right) + \Sch{r-2} (k-r+1),
    \end{multline*}
    where we keep in mind that \eqref{eq:thm_r=d} only holds true for $r \ge 2$. By using identity~\eqref{Sch_refinement} and
    \begin{equation*}
        \sum_{i=2}^{r+1-j} \Sch{i-2} a_{r-i,j} = \sum_{i=2}^{r} \Sch{i-2} a_{r-i,j}
    \end{equation*}
    because $a_{r-i,j} = 0$ if $j > r-i+1$, we then obtain
    \begin{equation*}
        \sum_{j=2}^{r-1} \binom{k-r+j}{j} a_{r-1,j+1} + \Sch{r-2} (k-r+1).
    \end{equation*}

    On the other hand, \eqref{eq:thm_r=d} is equivalent to
    \begin{multline*}
        \s{312}{r}{k}{r} - \s{312}{r}{k-1}{r} = \sum_{j=2}^{r} a_{r-1,j}\binom{k-r+j-1}{j-1}\\
        = \sum_{j=3}^{r} a_{r-1,j}\binom{k-r+j-1}{j-1} +  a_{r-1,2}\binom{k-r+1}{1}\\
        = \sum_{j=2}^{r-1} a_{r-1,j+1}\binom{k-r+j}{j} +  \Sch{r-2} (k-r+1),
    \end{multline*}
    where we used $a_{i,2}=\Sch{i-1}$ for $i \ge 2$. This concludes the proof.
\end{proof}

We now consider the following generalization of Schr\"oder paths: lattice paths from $(0,0)$ to $(n,m)$ with $n \ge m$, using steps $(1,0)$, $(0,1)$, and $(1,1)$, that never go above the diagonal $y=x$. Let $\Sch{n,m}$ denote the number of all these paths. We call these numbers \emph{Schr\"oder ballot numbers} in reference to the classical ballot numbers that generalize Catalan numbers. It follows from \cite[Theorem~2.9]{Sch25} (by setting $l=1$) that $\Sch{n,m}$ is given by
\begin{equation}\label{eq:Schroeder_ballot_numbers}
\frac{n-m+1}{n+1} \sum_{v=0}^{m} \binom{n+1}{m-v} \binom{n+v}{v}.
\end{equation}
We can express these Schr\"oder ballot numbers in terms of numbers of row-complete and extendably $312$-avoiding ASRs.

\begin{thm}\label{thm:Schroeder_ballot}
For $1 \le r \le k$, we have
\begin{equation*}
\Sch{k-1,r-1} = \s{312}{r}{k}{r}-\s{312}{r}{k-1}{r}.
\end{equation*}
\end{thm}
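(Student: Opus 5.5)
We prove the case $r=1$ directly and, for $r\ge 2$, combine \cref{thm:r=d} with a last-passage decomposition of Schr\"oder ballot paths. For $r=1$, \cref{lem:r=d} gives $\s{312}{1}{k}{1}=k$, so the difference is $1=\Sch{k-1,0}$: the only path is $k-1$ east steps. From now on let $2\le r\le k$.

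\emph{Step 1: an explicit formula for the difference.} Subtracting \eqref{eq:thm_r=d} at $k-1$ from \eqref{eq:thm_r=d} at $k$ and using Pascal's rule gives
\[
\s{312}{r}{k}{r}-\s{312}{r}{k-1}{r}=\sum_{j=2}^{r} a_{r-1,j}\binom{k-r+j-1}{j-1}.
\]
This identity already appears in the second half of the proof of \cref{thm:r=d}. It is also valid at $k=r$ if we read $\s{312}{r}{r-1}{r}=0$, since then only $j=r$ can contribute, and $a_{r-1,r}=0$ because $r>(r-1)+1$ is false only for $j\le r$; the boundary value just needs a quick check. The coefficient $\binom{k-r+j-1}{j-1}$ is the number of multisets of size $k-r$ drawn from a $j$-element set. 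So it suffices to give a bijection between ballot paths from $(0,0)$ to $(k-1,r-1)$ and pairs $(P,\mu)$, where $P$ is a Schr\"oder path from $(0,0)$ to $(r-1,r-1)$ and $\mu$ is a multiset of size $k-r$ on the set of diagonal touch points of $P$.

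\emph{Step 2: last-passage decomposition.} Let $Q$ be a ballot path to $(n,m)=(k-1,r-1)$, and track the statistic $x-y$ along it. It starts at $0$, ends at $n-m=k-r$, and never becomes negative. Steps $(1,0)$, $(0,1)$ and $(1,1)$ change it by $+1$, $-1$ and $0$ respectively. For each level $\ell=1,\dots,k-r$, consider the last step of $Q$ that moves $x-y$ from $\ell-1$ to $\ell$; it is necessarily an east step $E_\ell$. These $k-r$ marked east steps cut $Q$ uniquely as
\[
Q=Q_0E_1Q_1E_2\cdots E_{k-r}Q_{k-r}.
\]
Each $Q_\ell$ starts and ends at level $\ell$ and never drops below $\ell$, by the last-passage property. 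So each $Q_\ell$ is a translated Schr\"oder path, possibly empty. Concatenating $Q_0Q_1\cdots Q_{k-r}$ therefore gives a Schr\"oder path $P$ from $(0,0)$ to $(r-1,r-1)$. Recording, for each $\ell$, the touch point of $P$ at which $E_\ell$ was removed gives a multiset $\mu$ of size $k-r$ on the touch points; repeated points come from empty $Q_\ell$.

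\emph{Step 3: inverse map and conclusion.} Conversely, given $(P,\mu)$, cut $P$ at the touch points in $\mu$, in order and with multiplicity, and insert one east step at each cut. The pieces between consecutive cuts are Schr\"oder paths, so after the $\ell$th insertion the path stays at height $x-y\ge\ell$. Hence the result is a ballot path to $(k-1,r-1)$, and its inserted steps are exactly the last passages. These two maps are mutually inverse. Grouping the pairs $(P,\mu)$ by the number $j$ of touch points of $P$ reproduces $\sum_j a_{r-1,j}\binom{k-r+j-1}{k-r}$, which equals $\Sch{k-1,r-1}$. I expect the main care to be needed in Step 3: showing that the inserted steps really are the last passages, so that the decomposition is unique and the maps are inverse, and handling multiplicities correctly when several consecutive $Q_\ell$ are empty. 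The edge conventions ($k=r$, $\Sch{-1}$) are minor bookkeeping by comparison.
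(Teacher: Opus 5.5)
Your proof is correct, and after the shared first step it takes a genuinely different route from the paper. Both arguments begin by using \cref{thm:r=d} to write $\s{312}{r}{k}{r}-\s{312}{r}{k-1}{r}=\sum_{j=2}^{r}a_{r-1,j}\binom{k-r+j-1}{j-1}$.

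The paper reads $\binom{k-r+j-1}{j-1}$ as the number of strictly increasing labellings of the $j-1$ primitive segments of a Schr\"oder path by labels from $\{1,\dots,k-r+j-1\}$. It then checks that these labelled paths satisfy the Schr\"oder ballot recurrence $f(r,k)=f(r,k-1)+f(r-1,k)+f(r-1,k-1)$, with boundary values $f(1,k)=1$ and $f(r,r)=\Sch{r-1}$. The check is a case analysis on the last segment: either delete a final diagonal step, or strip the outer $E\cdots N$ of the last segment and give the released sub-segments consecutive top labels.

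You instead read the same binomial as counting multisets of size $k-r$ on the $j$ touch points. You then build an explicit bijection with ballot paths by marking, for each level $\ell$ of $x-y$, the last east step crossing from $\ell-1$ to $\ell$. What this buys you:
\begin{itemize}
\item a direct, non-recursive bijection that proves $\Sch{n,m}=\sum_j a_{m,j}\binom{n-m+j-1}{j-1}$ for all $n\ge m\ge 1$, without invoking the recurrence or boundary values of $\Sch{n,m}$;
\item transparent invertibility: after the $\ell$th inserted east step the level never drops below $\ell$, so the inserted steps are exactly the last passages.
\end{itemize}
The paper's approach fits the recursive framework used throughout the section. However, the bijectivity of its third case, which requires recovering $t$ from the maximal run of consecutive top labels, is left to the reader.

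One slip: your justification of the case $k=r$ in Step 1 is wrong as written. We have $a_{r-1,r}=2^{r-1}\neq 0$, and it is not true that only $j=r$ contributes. The correct reason is that evaluating \eqref{eq:thm_r=d} at $k-1=r-1$ makes every binomial $\binom{j-1}{j}$ vanish, which matches $\s{312}{r}{r-1}{r}=0$. Equivalently, $\s{312}{r}{r}{r}=\sum_j a_{r-1,j}=\Sch{r-1}$ agrees with your right-hand side, since every $\binom{j-1}{j-1}=1$. This does not affect the rest of the argument.
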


\begin{proof}
It is easy to see that $\Sch{n,0}=1$, $\Sch{n,n}=\Sch{n}$ and $\Sch{n,m}=\Sch{n,m-1}+\Sch{n-1,m}+\Sch{n-1,m-1}$ for $n \ge m$ since $\Sch{n-1,m}=0$ if $m=n$.

We show that the numbers $f(r,k) \coloneqq \s{312}{r}{k}{r}-\s{312}{r}{k-1}{r}$ fulfil the same recursion. It follows from \cref{thm:r=d} that $f(r,k) %= \s{312}{r}{k}{r}-\s{312}{r}{k-1}{r}
= \sum_{j=2}^r a_{r-1,j}\binom{k-r+j-1}{j-1}$ if $r \ge 2$.

For the initial conditions, we can easily see that $f(1,k)=1$ and $f(r,r)=\sum_{j=2}^{r} a_{r-1,j} = \Sch{r-1}$. For the recursion relation, we consider the set of all Schr\"oder paths from $(0,0)$ to $(r-1,r-1)$, which we uniquely partition into segments such that each segment touches the diagonal exactly twice. If $j$ is the number of times the Schr\"oder path touches the diagonal, then we have $j-1$ segments. For a given $k \ge r$, we then label each segment by an integer from $1$ to $k-r+j-1$ such that the labels strictly increase from the start point to the end point of the Schr\"oder path. These labelled paths are enumerated by $f(r,k)$. With this combinatorial interpretation in mind, we prove that $f(r,k)=f(r,k-1)+f(r-1,k)+f(r-1,k-1)$. We partition our labelled Schr\"oder paths as follows: If none of the segments has the highest possible label $k-r+j-1$, then these paths are enumerated by $f(r,k-1)$. Note that this set of paths is empty if $k=r$, matching $f(r,r-1)=0$. Now assume that the last segment is decorated by the highest possible label $k-r+j-1$. If this segment consists of a simple diagonal step $(1,1)$, we remove this step to obtain a Schr\"oder path from $(0,0)$ to $(r-2,r-2)$ whose segments are increasingly labelled with integers from $\{1,\dots,k-r+j-2\}$; these paths are enumerated by $f(r-1,k-1)$. If the last segment of the path is not $(1,1)$, then it starts with a horizontal unit step and ends with a vertical unit step. We remove these two steps from the last segment and, provided that the remaining part is not empty, add the rest back again. Thus, we add new segments to the path, say $t$, to which we assign the consecutive labels $k-r+j,\dots,k-r+j+t-1$. The labelled Schr\"oder path has now $j+t-2$ segments. With the maximal label being $k-r+j+t-1 = k-(r-1)+(j+t-2)$ we obtain a class of labelled Schr\"oder paths enumerated by $f(r-1,k)$.
This proves $f(r,k)=f(r,k-1)+f(r-1,k)+f(r-1,k-1)$.
\end{proof}

Using \cref{thm:Schroeder_ballot} and \eqref{eq:Schroeder_ballot_numbers}, we obtain the following double sum for $\s{312}{r}{k}{r}$.

\begin{cor}\label{cor:312_d=r}
For $1 \le r \le k$, we have
\begin{equation*}
\s{312}{r}{k}{r} = \sum_{i=r-1}^{k-1} \frac{i-r+2}{i+1} \sum_{v=0}^{r-1} \binom{i+1}{r-1-v} \binom{i+v}{v}.
\end{equation*}
\end{cor}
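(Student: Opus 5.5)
The corollary follows by telescoping \cref{thm:Schroeder_ballot} in the second argument and then substituting the closed form \eqref{eq:Schroeder_ballot_numbers}. Fix $r \ge 1$. For every $k' \ge r$, \cref{thm:Schroeder_ballot} gives $\s{312}{r}{k'}{r}-\s{312}{r}{k'-1}{r} = \Sch{k'-1,r-1}$. Summing this over $k'=r,r+1,\dots,k$, the left-hand side telescopes to $\s{312}{r}{k}{r}-\s{312}{r}{r-1}{r}$.

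The boundary term vanishes. An $r\times(r-1)$ ASR cannot have $r$ nonempty rows, since the number of nonempty rows is at most the number of columns, as observed at the start of \cref{sec:recursive}. Hence $\s{312}{r}{r-1}{r}=0$.

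This leaves
\[
\s{312}{r}{k}{r}=\sum_{k'=r}^{k}\Sch{k'-1,r-1}=\sum_{i=r-1}^{k-1}\Sch{i,r-1},
\]
after the substitution $i=k'-1$. Note that $i\ge r-1$ throughout, so the hypothesis $n\ge m$ needed for \eqref{eq:Schroeder_ballot_numbers} holds.

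Finally, insert \eqref{eq:Schroeder_ballot_numbers} with $n=i$ and $m=r-1$. The prefactor $\frac{n-m+1}{n+1}$ becomes $\frac{i-r+2}{i+1}$, and the inner sum becomes $\sum_{v=0}^{r-1}\binom{i+1}{r-1-v}\binom{i+v}{v}$. This is exactly the claimed double sum.

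The only thing left to check is that the formula from \cite[Theorem~2.9]{Sch25} applies at the edge cases. For $m=0$ it should give $\Sch{n,0}=1$: the prefactor is $1$ and the sum is $\binom{n+1}{0}\binom{n}{0}=1$, as required. For $r=1$ the corollary reads $\sum_{i=0}^{k-1}\frac{i+1}{i+1}\cdot 1 = k$, which agrees with $\s{312}{1}{k}{1}=k$ from the proof of \cref{thm:r=d}. No further obstacle is expected.
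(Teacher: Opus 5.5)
Your proposal is correct and takes the same route as the paper, which derives the corollary directly from \cref{thm:Schroeder_ballot} and \eqref{eq:Schroeder_ballot_numbers}. You telescope over $k'=r,\dots,k$, use $\s{312}{r}{r-1}{r}=0$, and substitute $n=i$, $m=r-1$. Your edge-case checks (the case $m=0$ and the value $\s{312}{1}{k}{1}=k$) agree with the paper's values.
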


Applying \cref{cor:312_d=r} to \cref{lem:k=d} yields the following expression for the number of balanced extendably $312$-avoiding ASRs.

\begin{cor}
    For $0\le k<r$, we have
    \begin{equation*}
    \s{312}{r}{k}{k}=\binom{r-1}{k}+\sum_{j=1}^k \binom{r-j-1}{k-j} \sum_{i=j-1}^{k-1} \frac{i-j+2}{i+1} \sum_{v=0}^{j-1} \binom{i+1}{j-1-v} \binom{i+v}{v}.
    \end{equation*}
\end{cor}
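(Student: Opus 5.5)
The plan is to substitute \cref{cor:312_d=r} into \cref{lem:k=d} and separate out the $j=0$ term.

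By \cref{lem:k=d}, for $1\le k<r$ we have
\[
\s{312}{r}{k}{k}=\sum_{j=0}^k \s{312}{j}{k}{j}\binom{r-j-1}{k-j}.
\]

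\textbf{The term $j=0$.} By the convention $\s{312}{0}{k}{0}=1$, this term equals $\binom{r-1}{k}$, which is the isolated summand in the claimed formula.

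\textbf{The terms $1\le j\le k$.} Here $1\le j\le k$, so the hypothesis of \cref{cor:312_d=r} is met with $(r,k)$ replaced by $(j,k)$. This gives
\[
\s{312}{j}{k}{j}=\sum_{i=j-1}^{k-1}\frac{i-j+2}{i+1}\sum_{v=0}^{j-1}\binom{i+1}{j-1-v}\binom{i+v}{v}.
\]
Inserting this into the remaining sum yields exactly the stated triple sum.

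\textbf{The case $k=0$.} \cref{lem:k=d} was only stated for $k\ge1$, so this case needs a separate check. The claimed formula reduces to $\binom{r-1}{0}=1$, with the inner sum empty. This agrees with the convention $\s{312}{r}{0}{0}=1$.

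There is no real obstacle. The only care needed is the index bookkeeping: the corollary must be applied with $r\mapsto j$, and the $j=0$ term must be handled via the convention rather than via the corollary, which requires $r\ge1$.
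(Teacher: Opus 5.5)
Your proposal is correct and follows the paper's own route: substitute \cref{cor:312_d=r} with $r\mapsto j$ for $1\le j\le k$ into \cref{lem:k=d}, and use the convention $\s{312}{0}{k}{0}=1$ to get the isolated term $\binom{r-1}{k}$ from $j=0$. Your separate check of the case $k=0$, which falls outside the range of \cref{lem:k=d}, is a detail the paper leaves implicit.
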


By recasting \eqref{eq:312_k=d}, we obtain another presentation for the  number of balanced extendably $312$-avoiding ASRs, namely
\[
\s{312}{r}{k}{k}=\sum_{j=0}^k c_{k,j}\binom{r-j}{k-j},
\]
where $c_{k,j}=\s{312}{j}{k}{j}-\s{312}{j-1}{k}{j-1}$. Then $c_{k,j}$ yields a refinement of $\Sch{k-1}$ since
$\sum_{j=0}^{k-1} c_{k,j} = \Sch{k-1}$ as shown in \cref{tab:ckj}. We leave it as an open problem to determine a natural combinatorial interpretation for this refinement.

\begin{table}[htbp]
$\begin{array}{r|rrrrr||r}
\hbox{\diagbox{$k$}{$j$}} & 0 & 1 & 2 & 3 & 4 & \Sigma\\
\hline
1 & 1 &   &    &    &    & 1 \\
2 & 1 & 1 &    &    &    & 2  \\
3 & 1 & 2 & 3  &    &    & 6  \\
4 & 1 & 3 & 8  & 10 &    & 22 \\
5 & 1 & 4 & 15 & 32 & 38 & 90 
\end{array}$
\caption{The first few values of $c_{k,j}$.}
\label{tab:ckj}
\end{table}

\subsection{\texorpdfstring{$312$-avoiding ASRs and restricted Delannoy paths}{312-avoiding ASRs and restricted Delannoy paths}}

We provide the first few values of $\s{312}{r}{k}{r}$ in \cref{tab:312}.

\begin{table}[htbp]
$\begin{array}{r|rrrrrrrrrrr}
\hbox{\diagbox{$r$}{$k$}} & 0 & 1 & 2 & 3 & 4 & 5 & 6 & 7 & 8 & 9 & 10 \\
\hline
0 & 1 & 1 & 1 & 1 & 1 & 1 & 1 & 1 & 1 & 1 & 1 \\
1 &   & 1 & 2 & 3 & 4 & 5 & 6 & 7 & 8 & 9 & 10 \\
2 &   &   & 2 & 6 & 12 & 20 & 30 & 42 & 56 & 72 & 90 \\
3 &   &   &   & 6 & 22 & 52 & 100 & 170 & 266 & 392 & 552 \\
4 &   &   &   &   & 22 & 90 & 236 & 500 & 930 & 1582 & 2520 \\
5 &   &   &   &   &    & 90 & 394 & 1108 & 2516 & 5006 & 9086 \\
\end{array}$ 
\caption{The first few values of $\s{312}{r}{k}{r}$.}
\label{tab:312}
\end{table}

The diagonal $\left( \s{312}{r}{r+2}{r} \right)_{r \ge 0}=1,3,12,52,236,1108,\dots$ from \cref{tab:312} appears in \cite[A268208]{oeis} as the number of Delannoy paths from $(0,0)$ to $(r,r)$ that do not cross the diagonal $y=x$ vertically (see also \cite{PR19}). A Delannoy path is said to cross the main diagonal vertically if it enters the diagonal by a vertical step and, after possibly some diagonal steps, leaves it by a vertical step; see \cref{fig:royal_paths} for an example. 

\begin{figure}[htbp]
    \centering
    \begin{subfigure}[c]{0.75\textwidth}
    \centering
    \begin{tikzpicture}[scale = .5]
        \draw[thin, dotted] (0,0) grid (2,2);
        \draw[very thin, red] (0,0) -- (2,2);
        \draw[very thick] (0,0) -- (0,2) -- (2,2);
    \end{tikzpicture}\qquad\begin{tikzpicture}[scale = .5]
        \draw[thin, dotted] (0,0) grid (2,2);
        \draw[very thin, red] (0,0) -- (2,2);
        \draw[very thick] (0,0) -- (0,1) -- (1,2) -- (2,2);
    \end{tikzpicture}\qquad\begin{tikzpicture}[scale = .5]
        \draw[thin, dotted] (0,0) grid (2,2);
        \draw[very thin, red] (0,0) -- (2,2);
        \draw[very thick] (0,0) -- (0,1) -- (1,1) -- (1,2) -- (2,2);
    \end{tikzpicture}\qquad\begin{tikzpicture}[scale = .5]
        \draw[thin, dotted] (0,0) grid (2,2);
        \draw[very thin, red] (0,0) -- (2,2);
        \draw[very thick] (0,0) -- (0,1) -- (1,1) -- (2,2);
    \end{tikzpicture}\qquad\begin{tikzpicture}[scale = .5]
        \draw[thin, dotted] (0,0) grid (2,2);
        \draw[very thin, red] (0,0) -- (2,2);
        \draw[very thick] (0,0) -- (0,1) -- (2,1) -- (2,2);
    \end{tikzpicture}\qquad\begin{tikzpicture}[scale = .5]
        \draw[thin, dotted] (0,0) grid (2,2);
        \draw[very thin, red] (0,0) -- (2,2);
        \draw[very thick] (0,0) -- (1,1) -- (1,2) -- (2,2);
    \end{tikzpicture}

    \bigskip

    \begin{tikzpicture}[scale = .5]
        \draw[thin, dotted] (0,0) grid (2,2);
        \draw[very thin, red] (0,0) -- (2,2);
        \draw[very thick] (0,0) -- (2,2);
    \end{tikzpicture}\qquad\begin{tikzpicture}[scale = .5]
        \draw[thin, dotted] (0,0) grid (2,2);
        \draw[very thin, red] (0,0) -- (2,2);
        \draw[very thick] (0,0) -- (1,1) -- (2,1) -- (2,2);
    \end{tikzpicture}\qquad\begin{tikzpicture}[scale = .5]
        \draw[thin, dotted] (0,0) grid (2,2);
        \draw[very thin, red] (0,0) -- (2,2);
        \draw[very thick] (0,0) -- (1,0) -- (1,1) -- (2,2);
    \end{tikzpicture}\qquad\begin{tikzpicture}[scale = .5]
        \draw[thin, dotted] (0,0) grid (2,2);
        \draw[very thin, red] (0,0) -- (2,2);
        \draw[very thick] (0,0) -- (1,0) -- (1,1) -- (2,1) -- (2,2);
    \end{tikzpicture}\qquad\begin{tikzpicture}[scale = .5]
        \draw[thin, dotted] (0,0) grid (2,2);
        \draw[very thin, red] (0,0) -- (2,2);
        \draw[very thick] (0,0) -- (1,0) -- (2,1) -- (2,2);
    \end{tikzpicture}\qquad\begin{tikzpicture}[scale = .5]
        \draw[thin, dotted] (0,0) grid (2,2);
        \draw[very thin, red] (0,0) -- (2,2);
        \draw[very thick] (0,0) -- (2,0) -- (2,2);
    \end{tikzpicture}
    \caption{}
    \label{subfig:royal_paths}
    \end{subfigure}\qquad\begin{subfigure}[c]{.125\textwidth}
    \centering
    \begin{tikzpicture}[scale = .5]
        \draw[thin, dotted] (0,0) grid (2,2);
        \draw[very thin, red] (0,0) -- (2,2);
        \draw[very thick] (0,0) -- (1,0) -- (1,2) -- (2,2);
    \end{tikzpicture}
    \caption{}
    \label{subfig:royal_paths_except}
    \end{subfigure}
    \caption{\cref{subfig:royal_paths} shows the twelve Delannoy paths from the origin to $(2,2)$ that do not cross the main diagonal vertically. \cref{subfig:royal_paths_except} shows the only Delannoy path from the origin to $(2,2)$ that crosses the main diagonal vertically, as it enters and exits the diagonal via vertical steps.}
    \label{fig:royal_paths}
\end{figure}

This is not a coincidence as we provide a bijection that maps row-complete, extendably $312$-avoiding ASRs of size $r \times (r+2)$ to Delannoy paths from $(0,0)$ to $(r,r)$ that do not cross the diagonal $y=x$ vertically. The bijection is based on the map~$\Psi$ presented in \cref{sec:ASM}.

\begin{thm}
    Let $r \ge 0$ be an integer. Then $\s{312}{r}{r+2}{r}$ equals the number of Delannoy paths from $(0,0)$ to $(r,r)$ that do not cross the diagonal $y=x$ vertically.
\end{thm}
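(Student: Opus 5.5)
We will not build the bijection directly. Instead we compare generating functions: first express $\s{312}{r}{r+2}{r}$ through Schröder ballot numbers, then count the restricted Delannoy paths by a first-return decomposition.

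\emph{Step 1: the ASR side.} Apply \cref{thm:Schroeder_ballot} twice and use $\s{312}{r}{r}{r}=\Sch{r-1}$. This gives
\[
\s{312}{r}{r+2}{r}=\Sch{r-1}+\Sch{r,r-1}+\Sch{r+1,r-1}\qquad (r\ge 1).
\]
The case $r=0$ is checked directly. So it suffices to show that the generating function of the right-hand side equals the generating function of the restricted Delannoy paths. The Schröder ballot numbers on the two subdiagonals have simple generating functions in terms of $S(x)=\sum_{n\ge0}\Sch{n}x^n$, obtained by a last-departure decomposition:
\[
\sum_n \Sch{n,n-1}x^n = xS(x)^2,\qquad \sum_n \Sch{n+1,n-1}x^{n+1}=\bigl(xS(x)^2\bigr)^2 \ \text{(up to a shift)} .
\]
To derive these, cut a path ending strictly below the diagonal at its last visit to the line $y=x-1$, respectively $y=x$. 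The exact form must be checked against the values in \cref{tab:312}, or against \eqref{eq:Schroeder_ballot_numbers}.

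\emph{Step 2: the path side.} Cut a Delannoy path from $(0,0)$ to $(r,r)$ at its diagonal touch points. It becomes a word in three letters: $D$ (a diagonal step on the diagonal), $B$ (an excursion strictly below the diagonal) and $A$ (an excursion strictly above). Each of $A$ and $B$ has generating function $u=xS(x)$, since such an excursion is a horizontal step, then a Schröder path, then a vertical step (or the reflected version). The path crosses the diagonal vertically exactly when the word contains a factor $BD^{*}A$: a $B$-excursion ends with a vertical step, and an $A$-excursion starts with one. Hence the forbidden configuration is a $B$ that is followed later by an $A$. The admissible words are therefore exactly those of the form $w_1w_2$, where $w_1\in\{A,D\}^*$ and $w_2$ is either empty or lies in $B\{B,D\}^*$. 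This gives the generating function
\[
\frac{1}{1-x-u}\Bigl(1+\frac{u}{1-x-u}\Bigr)=\frac{1-x}{(1-x-xS(x))^2}.
\]

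\emph{Step 3: comparison.} Use the quadratic relation $xS^2=(1-x)S-1$ to reduce both generating functions to rational expressions in $x$ and $S$, and verify that they agree. As a sanity check, this should reproduce $1,3,12,52,\dots$.

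The main obstacle is Step 3. It needs the correct generating functions of $\Sch{r,r-1}$ and $\Sch{r+1,r-1}$ with the right index shifts. If the algebra becomes unwieldy, we would instead extract coefficients from $(1-x)/(1-x-xS)^2$ and compare them with the sums in \eqref{eq:Schroeder_ballot_numbers}.

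The word decomposition also suggests a bijective version, which matches the paper's hint about $\Psi$. Decompose the ASR by its first column as in \cref{lem:r=d}. Each block that is a full $312$-avoiding ASM is sent via $\Psi$ to an excursion, and the two surplus columns govern the switch from $A$-type to $B$-type excursions.
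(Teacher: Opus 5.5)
Your route (generating functions instead of a bijection) is genuinely different from the paper's, and it can be completed. As written, however, Step~3 would fail because one of its inputs is wrong, and Step~3 itself, the only place the two sides are actually compared, is never carried out.

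The first-subdiagonal formula $\sum_n \Sch{n,n-1}x^n=xS^2$ is correct. The second is not $(xS^2)^2$ up to a shift. Cut a path to $(n+1,n-1)$ at its last visit to the line $y=x-1$. This leaves a piece counted by $xS^2$, a forced horizontal step, and a Schr\"oder path weakly below $y=x-2$, so
\[
\sum_{n\ge 1}\Sch{n+1,n-1}x^{n+1}=xS(x)^2\cdot x\cdot S(x)=x^2S(x)^3 .
\]
Indeed $\Sch{4,2}=30=[x^2]S^3$, while $[x^2]S^4=48$. With your version, the $r=2$ coefficient on the ASR side would be $2+4+8=14$ instead of $12$.

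With the correct formula everything closes in two lines. Since $S(1-x-xS)=1$, your path generating function is simply $(1-x)S^2$, and the ASR side is $1+xS+xS^2+xS^3$. From $xS^2=(1-x)S-1$ you get $xS^3=(1-x)S^2-S$, hence
\[
1+xS+xS^2+xS^3=1-(1-x)S+S^2=S^2-xS^2=(1-x)S^2 .
\]
Steps~1 and~2 are otherwise sound. In particular, reducing the forbidden factor $BD^{*}A$ to ``some $B$ precedes some $A$'' is correct, and so is the unique factorization $w_1w_2$.

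Compared with the paper: the paper builds an explicit recursive bijection $\Xi$ from $\asr{r,r+2,r}{312}$ to the restricted paths. It splits according to the first two columns and glues in $\Psi$ or its reflection $\widetilde{\Psi}$. Your argument instead rests on \cref{thm:Schroeder_ballot} (hence on \cref{lem:r=d} and \cref{thm:r=d}) plus a first-return decomposition of the paths.

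Your route is shorter and yields the closed form $\sum_{r}\s{312}{r}{r+2}{r}x^r=(1-x)S(x)^2$. The same computation gives $\sum_r\s{312}{r}{r+j}{r}x^r=1+x(S+S^2+\dots+S^{j+1})$ for every $j$. What it does not give is a structural correspondence, such as the paper's matching of the first-column $1$ with the last return to the diagonal. Your closing remark about a bijection is only a heuristic.
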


\begin{proof}

The bijection~$\Psi$ presented in \cref{sec:ASM} maps $n \times n$ $312$-avoiding ASMs to Schr\"oder paths from $(0,0)$ to $(n-1,n-1)$. Let $\widetilde{\Psi}$ be the corresponding bijection that maps a $312$-avoiding  ASMs $M$ to the path obtained by reflecting $\Psi(M)$ along the diagonal~$y=x$; we call these paths \emph{superdiagonal Schr\"oder paths} in contrast to the \emph{subdiagonal Schr\"oder paths} defined earlier.

Let us now recursively construct a bijection~$\Xi$ between $\asr{r,r+2,r}{312}$ and Delannoy paths from $(0,0)$ to $(r,r)$ that do not cross the diagonal~$y=x$ vertically. To this end, consider a matrix $R \in \asr{r,r+2,r}{312}$.

We consider two base cases. If $r=0$, we map the empty ASR to the empty path. The base case~$r=1$ comprises the following three mappings:
    \begin{center}
    \begin{tikzpicture}[scale=0.5]
    \draw node at (1,3) {$\begin{pmatrix}
    1 & 0 & 0
    \end{pmatrix}$};
    \draw node at (1,2) {\rotatebox{-90}{$\mapsto$}};
    \draw node at (1,1) {\begin{tikzpicture}[scale=.5]
    \draw[thin, dotted] (0,0) grid (1,1);
    \draw[very thin, red] (0,0) -- (1,1);
    \draw[very thick] (0,0) -- (1,1);
    \end{tikzpicture}};

    \draw node at (7,3) {$\begin{pmatrix}
    0 & 1 & 0
    \end{pmatrix}$};
    \draw node at (7,2) {\rotatebox{-90}{$\mapsto$}};
    \draw node at (7,1) {\begin{tikzpicture}[scale=.5]
    \draw[thin, dotted] (0,0) grid (1,1);
    \draw[very thin, red] (0,0) -- (1,1);
    \draw[very thick] (0,0) -- (0,1) -- (1,1);
    \end{tikzpicture}};

    \draw node at (13,3) {$\begin{pmatrix}
    0 & 0 & 1
    \end{pmatrix}$};
    \draw node at (13,2) {\rotatebox{-90}{$\mapsto$}};
    \draw node at (13,1) {\begin{tikzpicture}[scale=.5]
    \draw[thin, dotted] (0,0) grid (1,1);
    \draw[very thin, red] (0,0) -- (1,1);
    \draw[very thick] (0,0) -- (1,0) -- (1,1);
    \end{tikzpicture}};
    
    \end{tikzpicture}
    \end{center}

Now assume $r>1$. We distinguish the following three cases:
    \begin{enumerate}[(i)] 
        \item {\bf{The first column is nonempty and has a $1$ in row~$i$ (\cref{fig:312-ASR_bij_1}).}} We can decompose $R$ into $M \in \asm{i-1}{312}$ and another submatrix~$R^\prime$. If row~$i$ does not contain a $-1$ (\cref{subfig:312-ASR_bij_1a}), then $R^\prime \in \asr{r-i,r+2-i,r-i}{312}$. In this case, we first map $R$ to $\Xi(R^{\prime})$, which starts at $(0,0)$ and ends at $(r-i,r-i)$; we then append a diagonal step followed by a horizontal step. Next, we join the lattice path of $\Psi(M)$, which starts at $(r-i+2,r-i+1)$ and ends at $(r,r-1)$, and finally add a vertical step. In particular, if $i=1$, then $\Xi(R)$ ends with a diagonal step.

    \begin{figure}[htbp]
    \centering
    \begin{subfigure}[c]{.75\textwidth}
    \centering
    \begin{tikzpicture}[scale=.5,baseline=(current bounding box.center)]
    \draw[fill=lightgray] (0,0) rectangle (7,5);

    \draw[fill=white] (1,3) rectangle (3,5);
    \draw[fill=white] (3,0) rectangle (7,2);
    \draw[fill=white] (0,2) rectangle (1,3);

    \node at (.5,2.5) {$1$};
    \node at (-.5,2.5) {$i$};
    \node at (2,4) {$M$};
    \node at (5,1) {$R^{\prime}$};
    \end{tikzpicture}$\quad\longmapsto\quad$
    \begin{tikzpicture}[scale=.5,baseline=(current bounding box.center)]      
    \draw[very thin, red] (0,0) -- (5,5);

    \draw[very thick, fill=gray] (0,0) rectangle (2,2);
    \draw[very thick] (2,2) -- (3,3) -- (4,3);
    \draw[very thick,fill=gray] (4,3) -- (5,3) -- (5,4) -- cycle;
    \draw[very thick] (5,4) -- (5,5);
    \node at (0,0) {$\bullet$};
    \node at (2,2) {$\bullet$};
    \node at (3,3) {$\bullet$};
    \node at (5,5) {$\bullet$};
    \node at (2,1)[right] {$\Xi(R^{\prime})$};
    \node at (5,3.5)[right] {$\Psi(M)$};
    \end{tikzpicture}
    \caption{}
    \label{subfig:312-ASR_bij_1a}
    \end{subfigure}

    \begin{subfigure}[c]{.75\textwidth}
    \centering
    \begin{tikzpicture}[scale=.5,baseline=(current bounding box.center)]
    \draw[fill=lightgray] (0,0) rectangle (7,5);

    \draw[fill=white] (1,3) rectangle (3,5);
    \draw[fill=white] (2,0) rectangle (7,3);
    \draw[fill=white] (0,2) rectangle (1,3);
    \draw[fill=white] (2,2) rectangle (3,3);

    \node at (.5,2.5) {$1$};
    \node at (2.5,2.5) {$-1$};
    \node at (-.5,2.5) {$i$};
    \node at (2,4) {$M$};
    \node at (4.5,1.5) {$R^{\prime}$};
    \end{tikzpicture}$\quad\longmapsto\quad$
    \begin{tikzpicture}[scale=.5,baseline=(current bounding box.center)]      
    \draw[very thin, red] (0,0) -- (5,5);

    \draw[very thick, fill=gray] (0,0) rectangle (3,3);
    \draw[very thick] (3,3) -- (4,3);
    \draw[very thick,fill=gray] (4,3) -- (5,3) -- (5,4) -- cycle;
    \draw[very thick] (5,4) -- (5,5);
    \node at (0,0) {$\bullet$};
    \node at (3,3) {$\bullet$};
    \node at (5,5) {$\bullet$};
    \node at (3,1.5)[right] {$\Xi(\widehat{R^{\prime}})$};
    \node at (5,3.5)[right] {$\Psi(M)$};
    \end{tikzpicture}
    \caption{}
    \label{subfig:312-ASR_bij_1b}
    \end{subfigure}
    \caption{The bijection~$\Xi$ if the first column is nonempty.}
    \label{fig:312-ASR_bij_1}
    \end{figure}

        If row~$i$ contains a $-1$ (\cref{subfig:312-ASR_bij_1b}), then $R^\prime$ is of size $(r+1-i) \times (r+3-i)$. We obtain a matrix $\widehat{R^\prime} \in \asr{r+1-i,r+3-i,r+1-i}{312}$ if we replace that $-1$ in $R^\prime$ by $0$. In this case, we first map $R$ to $\Xi(\widehat{R^{\prime}})$, which starts at $(0,0)$ and ends at $(r-i+1,r-i+1)$; we then append a horizontal step. Next, we join the lattice path of $\Psi(M)$, which starts at $(r-i+2,r-i+1)$ and ends at $(r,r-1)$; finally, we add a vertical step.
        \item {\bf{The first two columns are empty (\cref{fig:312-ASR_bij_2}).}} After removing the two empty columns, we obtain a matrix $M \in \asm{r}{312}$. We map $R$ to the lattice path that starts with a horizontal step, continues with $\Psi(M)$ from $(1,0)$ to $(r,r-1)$, and ends with a vertical step.

    \begin{figure}[htbp]
    \centering
    \begin{tikzpicture}[scale=.5,baseline=(current bounding box.center)]
    \draw[fill=lightgray] (0,0) rectangle (7,5);

    \draw[fill=white] (2,0) rectangle (7,5);

    \node at (4.5,2.5) {$M$};
    \end{tikzpicture}$\quad\longmapsto\quad$
    \begin{tikzpicture}[scale=.5,baseline=(current bounding box.center)]      
    \draw[very thin, red] (0,0) -- (5,5);

    \draw[very thick] (0,0) -- (1,0);
    \draw[very thick, fill=gray] (1,0) -- (5,0) -- (5,4) -- cycle;
    \draw[very thick] (5,4) -- (5,5);
    \node at (0,0) {$\bullet$};
    \node at (5,5) {$\bullet$}; 
    \node at (5,2)[right] {$\Psi(M)$};
    \end{tikzpicture}
    \caption{The bijection~$\Xi$ if the first two columns are empty.}
    \label{fig:312-ASR_bij_2}
    \end{figure}
        \item {\bf{The first column is empty, but the second is not (\cref{fig:312-ASR_bij_3}).}} These matrices are mapped to superdiagonal Schr\"oder paths that end with a horizontal step. These are counted by \cite[A006319]{oeis} and first appeared in \cite{Kre73}. We distinguish three subcases.

    \begin{figure}[htbp]
    \centering
    \begin{subfigure}[c]{.75\textwidth}
    \centering
    \begin{tikzpicture}[scale=.5,baseline=(current bounding box.center)]
        \draw[fill=lightgray] (0,0) rectangle (7,5);

        \draw[fill=white] (1,0) rectangle (6,5);

        \node at (3.5,2.5) {$M$};
    \end{tikzpicture}$\quad\longmapsto\quad$
    \begin{tikzpicture}[scale=.5,baseline=(current bounding box.center)]      
        \draw[very thin, red] (0,0) -- (5,5);

        \draw[very thick] (0,0) -- (0,1);
        \draw[very thick, fill=gray] (0,1) -- (0,5) -- (4,5) -- cycle;
        \draw[very thick] (4,5) -- (5,5);

        \node at (0,0) {$\bullet$};
        \node at (5,5) {$\bullet$};
    
        \node at (0,3)[left] {$\widetilde{\Psi}(M)$};
    \end{tikzpicture}
    \caption{}
    \label{subfig:312-ASR_bij_3a}
    \end{subfigure}
    
    \begin{subfigure}[c]{.75\textwidth}
    \centering
    \begin{tikzpicture}[scale=.5,baseline=(current bounding box.center)]
        \draw[fill=lightgray] (0,0) rectangle (7,5);

        \draw[fill=white] (1,3) rectangle (3,5);
        \draw[fill=white] (4,0) rectangle (7,3);

        \node at (2,4) {$M_1$};
        \node at (5.5,1.5) {$M_2$};

        \end{tikzpicture}$\quad\longmapsto\quad$
    \begin{tikzpicture}[scale=.5,baseline=(current bounding box.center)]      
        \draw[very thin, red] (0,0) -- (5,5);

        \draw[very thick] (0,0) -- (1,1);
        \draw[very thick, fill=gray] (1,1) -- (2,2) -- (1,2) -- cycle;
        \draw[very thick] (2,2) -- (2,3);
        \draw[very thick, fill=gray] (2,3) -- (4,5) -- (2,5) -- cycle;
        \draw[very thick] (4,5) -- (5,5);

        \node at (0,0) {$\bullet$};
        \node at (1,1) {$\bullet$};
        \node at (2,2) {$\bullet$};
        \node at (5,5) {$\bullet$};
    
        \node at (2,4)[left] {$\widetilde{\Psi}(M_2)$};
        \node at (1,1.5)[left] {$\widetilde{\Psi}(M_1)$};
    \end{tikzpicture}
    \caption{}
    \label{subfig:312-ASR_bij_3b}
    \end{subfigure}

    \begin{subfigure}[c]{.75\textwidth}
    \centering
    \begin{tikzpicture}[scale=.5,baseline=(current bounding box.center)]
        \draw[fill=lightgray] (0,0) rectangle (7,5);

        \draw[fill=white] (1,2) rectangle (4,5);
        \draw[fill=white] (3,2) rectangle (4,3);
        \draw[fill=white] (4,0) rectangle (7,3);

        \node at (2.5,3.5) {$M_1$};
        \node at (5.5,1.5) {$M_2$};

        \node at (3.5,2.5) {$-1$};
        \end{tikzpicture}$\quad\longmapsto\quad$
    \begin{tikzpicture}[scale=.5,baseline=(current bounding box.center)]      
    \draw[very thin, red] (0,0) -- (5,5);

    \draw[very thick, fill=gray] (0,0) -- (2,2) -- (0,2) -- cycle;
    \draw[very thick] (2,2) -- (2,3);
    \draw[very thick, fill=gray] (2,3) -- (4,5) -- (2,5) -- cycle;
    \draw[very thick] (4,5) -- (5,5);

    \node at (0,0) {$\bullet$};
    \node at (2,2) {$\bullet$};
    \node at (5,5) {$\bullet$};
    
    \node at (2,4)[left] {$\widetilde{\Psi}(M_2)$};
    \node at (0,1)[left] {$\widetilde{\Psi}(\widehat{M_1})$};
    \end{tikzpicture}
    \caption{}
    \label{subfig:312-ASR_bij_3c}
    \end{subfigure}
    \caption{The bijection~$\Xi$ if the first column is empty but not the second.}
    \label{fig:312-ASR_bij_3}
    \end{figure}

        If $R$ has two empty columns, namely the first and the last column (\cref{subfig:312-ASR_bij_3a}), we remove these empty columns to obtain an ASM~$M\in\asm{r}{312}$. We map $R$ to the lattice path that starts with a vertical step, followed by $\widetilde{\Psi}(M)$ from $(0,1)$ to $(r-1,r)$, and ends with a horizontal step.

        If $R$ has two empty columns, whereas the second empty column is at position~$j \neq r+2$ (\cref{subfig:312-ASR_bij_3b}) (these matrices are enumerated by \cite[A006319]{oeis}), then $j$ encodes the point of the last return in the corresponding lattice path~$\Xi(R)$. More precisely, $R$ decomposes into two $312$-avoiding ASMs $M_1 \in \asm{j-2}{312}$ and $M_2 \in \asm{r-j+2}{312}$. We map $R$ to the lattice path that starts with a diagonal step, followed by $\widetilde{\Psi}(M_1)$ from $(1,1)$ to $(j-2,j-2)$; we then append a vertical step, $\widetilde{\Psi}(M_2)$ from $(j-2,j-1)$ to $(r-1,r)$, and finally a horizontal step.

    If $R$ has only one empty column (\cref{subfig:312-ASR_bij_3c}) (these matrices are counted by \cite[A006320]{oeis}), then there is exactly one column that sums to $0$ besides the first column, say at position~$j$. Then $R$ decomposes into matrices $M_1$ of size $(j-1) \times (j-1)$ with a $-1$ in the bottom right corner and $M_2 \in \asm{r-j+2}{312}$. When changing the $-1$ in the bottom right corner of $M_1$ to $0$, we obtain $\widehat{M_1} \in \asm{j-1}{312}$. In particular, $\widehat{M_1}$ has no $1$ in the bottom right corner.

    Here $j$ encodes again the point of the last return to the main diagonal in $\Xi(R)$. We map $R$ to the lattice path starting with  $\widetilde{\Psi}(\widehat{M_1})$ from $(0,0)$ to $(j-2,j-2)$ (the fact that $\widehat{M_1}$ has no $1$ in the bottom right corner ensures that $\widetilde{\Psi}(\widehat{M_1})$ starts with a vertical step), which is followed by a vertical step, $\widetilde{\Psi}(M_2)$ from $(j-2,j-1)$ to $(r-1,r)$, and a horizontal step.

\end{enumerate}
\end{proof}

As a consequence of \cite{PR19}, we obtain the following closed-form expression.

\begin{cor}
    For any nonnegative integer~$r$, we have 
    \begin{equation*}
         \s{312}{r}{r+2}{r} = \sum_{i=0}^{r} \frac{1}{i+1} \binom{2i+2}{i} \binom{r+i}{r-i}.
    \end{equation*}
\end{cor}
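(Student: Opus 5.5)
The corollary follows once the preceding theorem is combined with the enumeration in \cite{PR19}. By that theorem, $\s{312}{r}{r+2}{r}$ equals the number of Delannoy paths from $(0,0)$ to $(r,r)$ that do not cross the diagonal $y=x$ vertically. These paths form sequence \cite[A268208]{oeis}, and \cite{PR19} gives an explicit formula for them. So the plan is to identify our counting problem with the one solved there and check that the formula agrees with $\sum_{i=0}^{r} \frac{1}{i+1}\binom{2i+2}{i}\binom{r+i}{r-i}$.

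The first step is to confirm that the notion of "crossing the diagonal vertically" in \cite{PR19} matches ours. A path crosses vertically if it enters the diagonal by a vertical step and, after possibly some diagonal steps, leaves it by a vertical step. The reference may instead use the reflected convention (horizontal crossings) or phrase the restriction in a different model. In either case, reflection in $y=x$ gives a bijection between the two families, so the counts agree.

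Next I would match the formula itself, since \cite{PR19} may present the count in another shape, for instance as a generating function or a different sum. If it is a generating function, I would extract coefficients as follows. Write the sum as $\sum_i \binom{r+i}{2i} \cdot \frac{1}{i+1}\binom{2i+2}{i}$. The factor $\frac{1}{i+1}\binom{2i+2}{i}$ is a ballot-type number whose generating function is algebraic in $C(x)$. The factor $\binom{r+i}{2i}$ is the standard kernel turning a series $g(u)$ into $\frac{1}{1-x}\, g\!\left(\frac{x}{(1-x)^2}\right)$.

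As an independent check, I would compute small values: $r=0,1,2$ give $1$, $1+2=3$, and $1+2\cdot3+5=12$. These agree with \cref{tab:312}, which lists $1,3,12,52,\dots$ on the diagonal $k=r+2$.

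The only real obstacle is this bookkeeping: aligning the conventions and the form of the formula with \cite{PR19}. It should amount to a reflection plus a routine binomial or generating-function identity, with the small-value check as confirmation.
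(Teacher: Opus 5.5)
Your plan is the paper's argument: combine the preceding theorem, which identifies $\s{312}{r}{r+2}{r}$ with the number of Delannoy paths from $(0,0)$ to $(r,r)$ not crossing $y=x$ vertically, with the enumeration of those paths in \cite{PR19}. The paper likewise just cites \cite{PR19} without redoing the convention or formula matching, and your check of the values $1,3,12$ against \cref{tab:312} is consistent with it.
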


\subsection{\texorpdfstring{$213$-avoiding ASRs and little Schröder paths}{213-avoiding ASRs and little Schröder paths}}

We provide the first few values of row-complete, extendably $213$-avoiding ASRs in \cref{tab:213}.

\begin{table}[htbp]
$\begin{array}{r|rrrrrrrrrrr}
\hbox{\diagbox{$r$}{$k$}} & 0 & 1 & 2 & 3 & 4 & 5 & 6 & 7 & 8 & 9 & 10 \\
\hline
0 & 1 & 1 & 1 & 1 & 1 & 1 & 1 & 1 & 1 & 1 & 1 \\
1 &   & 1 & 2 & 3 & 4 & 5 & 6 & 7 & 8 & 9 & 10 \\
2 &   &   & 2 & 6 & 11 & 17 & 24 & 32 & 41 & 51 & 62 \\
3 &   &   &   & 6 & 22 & 45 & 76 & 116 & 166 & 227 & 300 \\
4 &   &   &   &   & 22 & 90 & 197 & 353 & 569 & 857 & 1230 \\
5 &   &   &   &   &    & 90 & 394 & 903 & 1688 & 2833 & 4436 \\
\end{array}$
\caption{The first few values of $\s{213}{r}{k}{r}$.}
\label{tab:213}
\end{table}

We observe that the diagonal $\left(\s{213}{r}{r+2}{r}\right)_{r \ge 0}$ in \cref{tab:213} comprises the little Schröder numbers. These numbers count Schr\"oder paths that do not have diagonal steps on the main diagonal. We call these paths \emph{little Schr\"oder paths} in contrast to the ordinary (large) Schr\"oder paths.

\begin{thm}
Let $r \ge 1$ be an integer. Then $\s{213}{r-1}{r+1}{r-1}$ equals the $r$th little Schr\"oder number.
\end{thm}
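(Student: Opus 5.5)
I would prove this by generating functions, using the recurrence of \cref{lem:213_r=d} twice, along the two diagonals $k=r+1$ and $k=r+2$. Write $A_n=\s{213}{n}{n+1}{n}$ and $B_n=\s{213}{n}{n+2}{n}$ for $n\ge 0$, with $A_0=B_0=1$. Let $S(x)=\sum_{n\ge0}\Sch{n}x^n=\frac{1-x-\sqrt{1-6x+x^2}}{2x}$. The claim is that $B_{r-1}=\tfrac12\Sch{r}$ for $r\ge1$, since the $r$th little Schr\"oder number equals half the $r$th large Schr\"oder number for $r \ge 1$. In generating-function form this reads
\begin{equation*}
B(x)=\sum_{n\ge0}B_nx^n=\frac{S(x)-1}{2x}.
\end{equation*}

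Step 1. Apply \cref{lem:213_r=d} with $k=r+1=n+1$. The first term is $\s{213}{n}{n}{n}=\Sch{n-1}$, the base case for ASMs. The summands are $\Sch{i-1}\s{213}{n-i}{n+1-i}{n-i}=\Sch{i-1}A_{n-i}$. Hence, for $n\ge1$,
\begin{equation*}
A_n=\Sch{n-1}+\sum_{i=1}^n\Sch{i-1}A_{n-i},
\end{equation*}
which gives $A(x)=1+xS(x)+xS(x)A(x)$. Therefore $A=(1+xS)/(1-xS)$.

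Step 2. Apply \cref{lem:213_r=d} with $k=n+2$. The first term is now $A_n$, and the summands are $\Sch{i-1}B_{n-i}$. This gives $B(x)=A(x)+xS(x)B(x)$, so
\begin{equation*}
B=\frac{1+xS}{(1-xS)^2}.
\end{equation*}
I need to check that the recurrence is valid for the small cases $n=1$ in both steps, i.e.\ that the convention $\Sch{-1}=1$ does not enter here. It does not, because $i\ge1$. The first values also agree with \cref{tab:213}: $A_1=2$, $B_1=3$, $B_2=11$.

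Step 3, the main computational obstacle, is to verify the algebraic identity
\begin{equation*}
\frac{1+xS}{(1-xS)^2}=\frac{S-1}{2x}.
\end{equation*}
For this I would use the quadratic relation $xS^2=(1-x)S-1$. Setting $u=xS$, this relation reads $u^2=(1-x)u-x$, and I would reduce both sides of the identity to polynomials in $u$ of degree at most $1$ over $\mathbb{Q}(x)$. Concretely, $(1-u)^2=1-2u+u^2=1-x-(1+x)u$. It then suffices to check that
\begin{equation*}
2x(1+u)=(u-x)\bigl(1-x-(1+x)u\bigr),
\end{equation*}
after again replacing $u^2$ via the relation. If the algebra turns out messier than expected, a fallback is to compare Taylor coefficients via Lagrange inversion.

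Alternatively, a bijective proof in the spirit of the bijection for $\asr{r,r+2,r}{312}$ should work. The idea is to decompose by the first column as in the proof of \cref{lem:213_r=d}, and to send the $213$-avoiding ASM pieces to Schr\"oder paths via the analogue of $\Psi$. I would present the generating-function proof as the primary argument.
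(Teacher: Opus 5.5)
Your argument is correct and takes a genuinely different route from the paper's. Step~3 contains a slip you should fix. Since $S-1=(u-x)/x$, clearing denominators in $\frac{1+u}{(1-u)^2}=\frac{S-1}{2x}$ gives $2x^2(1+u)=(u-x)(1-u)^2$, not $2x(1+u)=\dots$. Your displayed equation is false as written: the lowest-order terms are $2x$ versus $2x^2$. Carrying out your reduction with $u^2=(1-x)u-x$ gives $(u-x)\bigl(1-x-(1+x)u\bigr)=2x^2+2x^2u$, so the corrected identity holds.

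There is also a shorter way. Step~1 already gives $A=S$, because $(1+xS)/(1-xS)=S$ is equivalent to $xS^2=(1-x)S-1$. Combinatorially, $\s{213}{n}{n+1}{n}=\Sch{n}$ because such an ASR has exactly one column summing to $0$ and completes uniquely to an $(n+1)\times(n+1)$ ASM by adding one row. Then $B=S/(1-xS)$, and $B=(S-1)/(2x)$ is the same quadratic relation written as $2xS=(S-1)(1-xS)$.

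You should also say explicitly that you only invoke \cref{lem:213_r=d} with $k>r$. At $k=r$ the recurrence as printed overcounts: for $r=k=2$ it gives $3$ instead of $\s{213}{2}{2}{2}=2$. The reason is that the case with a $-1$ in row~$i$ needs that $-1$ in column $k-i+1\ge 2$. For $k\in\{r+1,r+2\}$ you have $i\le r<k$, so this issue does not arise.

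The paper instead proves the theorem by an explicit recursive bijection $\Phi$ from $\asr{r-1,r+1,r-1}{213}$ to little Schr\"oder paths. In it, the row of the $1$ in the first column records the last return to the diagonal, and the ASM pieces are transported by $M\mapsto\Psi(M^{\rev})$. The presence of a $-1$ in that row decides whether the lifted part before the final vertical step ends in a diagonal step. That proof explains \emph{why} little Schr\"oder paths appear. It needs neither generating functions nor the fact that little Schr\"oder numbers are half the large ones; your closing remark sketches essentially this route.

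Your proof is shorter and purely mechanical once the row-complete recurrence is available, and it yields more. Iterating your Step~2 gives $\sum_{n\ge0}\s{213}{n}{n+m}{n}x^n=(1+xS)/(1-xS)^m$ for every $m\ge0$, i.e.\ generating functions for all diagonals of \cref{tab:213} at once. It does not, however, give an explicit correspondence.
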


\begin{proof}
    We prove this statement by providing a bijection~$\Phi$ from extendably $213$-avoiding ASRs in $\asr{r-1,r+1,r-1}{213}$ to little Schr\"oder paths of length~$r$. To this end, let us first consider how to apply the map $\Psi$ to $213$-avoiding ASMs. For a permutation~$\pi$ of length~$n$, let $\pi^{\rev}$ denote the \emph{reverse permutation}, defined by $i \mapsto \pi(n+1-i)$. The reverse permutation corresponds to a reflection of the corresponding permutation matrix across a horizontal axis. This map naturally generalizes to all other matrices. Since $213^{\rev}=312$, we readily obtain a bijection between $213$-avoiding ASMs $M$ and large Schr\"oder paths defined by $M \mapsto \Psi(M^{\rev})$.

    We define the bijection~$\Phi$ recursively. For the base case~$r=1$, we map the empty matrix to the unique little Schr\"oder path of length~$1$:

    \begin{center}
    \begin{tikzpicture}[scale=0.5]
    \draw node at (0,0) {$\varnothing$};
    \draw node at (1,0) {$\mapsto$};
    \draw node at (2,0) {\begin{tikzpicture}[baseline=(current bounding box.center),scale=.5]
    \draw[thin, dotted] (0,0) grid (1,1);
    \draw[very thin, red] (0,0) -- (1,1);
    \draw[very thick] (0,0) -- (1,0) -- (1,1);
    \end{tikzpicture}};
    \end{tikzpicture}
    \end{center}

    Now consider a matrix $R\in\asr{r-1,r+1,r-1}{213}$ with $r>1$. If the first column is empty, then the matrix with its first column removed has exactly one column that sums to zero and can thus be uniquely completed to a $213$-avoiding ASM~$M$ of size $r \times r$ by adding a new bottom row. In this case, we map $R$ to the lattice path that starts at $(0,0)$ with a horizontal step to the right, which is followed by $\Psi(M^{\rev})$ from $(1,0)$ to $(r,r-1)$ and by a vertical step from $(r,r-1)$ to $(r,r)$.

    If the first column is not empty, then it must contain a $1$, say in row~$i$. We have one of the two cases shown in \cref{fig:213-ASR_bij}.

    \begin{figure}[htbp]
    \centering
    \begin{subfigure}[c]{.75\textwidth}
    \centering
    \begin{tikzpicture}[scale=0.5,baseline=(current bounding box.center)]
    \draw[fill=lightgray] (0,0) rectangle (7,5);

    \draw[fill=white] (1,0) rectangle (5,2);
    \draw[fill=white] (5,3) rectangle (7,5);
    \draw[fill=white] (0,2) rectangle (1,3);

    \node at (.5,2.5) {$1$};
    \node at (-.5,2.5) {$i$};
    \node at (6,4) {$M$};
    \node at (3,1) {$R^{\prime}$};
    \end{tikzpicture}$\quad\longmapsto\quad$
    \begin{tikzpicture}[scale=.5,baseline=(current bounding box.center)]      
    \draw[very thin, red] (0,0) -- (6,6);

    \draw[very thick,fill=gray] (0,0) -- (3,0) -- (3,3) -- cycle;
    \draw[very thick] (3,3) -- (4,3);
    \draw[very thick,fill=gray] (4,3) -- (5,3) -- (5,4) -- cycle;
    \draw[very thick] (5,4) -- (6,5) -- (6,6);
    \node at (0,0) {$\bullet$};
    \node at (6,6) {$\bullet$};
    \node at (3,1.5)[right] {$\Phi(R^{\prime})$};
    \node at (5,3.5)[right] {$\Psi(M^{\rev})$};
    \end{tikzpicture}
    \caption{}
    \label{subfig:213-ASR_bij_a}
    \end{subfigure}

    \begin{subfigure}[c]{.75\textwidth}
    \centering
    \begin{tikzpicture}[scale=0.5,baseline=(current bounding box.center)]
    \draw[fill=lightgray] (0,0) rectangle (7,5);

    \draw[fill=white] (1,0) rectangle (5,2);
    \draw[fill=white] (4,2) rectangle (7,5);
    \draw[fill=white] (0,2) rectangle (1,3);
    \draw[fill=white] (4,2) rectangle (5,3);

    \node at (.5,2.5) {$1$};
    \node at (-.5,2.5) {$i$};
    \node at (4.5,2.5) {$-1$};
    \node at (5.5,3.5) {$M$};
    \node at (3,1) {$R^{\prime}$};
    \end{tikzpicture}$\quad\longmapsto\quad$
    \begin{tikzpicture}[scale=.5,baseline=(current bounding box.center)]      
    \draw[very thin, red] (0,0) -- (6,6);

    \draw[very thick,fill=gray] (0,0) -- (3,0) -- (3,3) -- cycle;
    \draw[very thick] (3,3) -- (4,3);
    \draw[very thick,fill=gray] (4,3) -- (6,3) -- (6,5) -- cycle;
    \draw[very thick] (6,5) -- (6,6);
    \node at (0,0) {$\bullet$};
    \node at (6,6) {$\bullet$};
    \node at (3,1.5)[right] {$\Phi(R^{\prime})$};
    \node at (6,4)[right] {$\Psi(\widehat{M}^{\rev})$};
    \end{tikzpicture}
    \caption{}
    \label{subfig:213-ASR_bij_b}
    \end{subfigure}
    \caption{The bijection~$\Phi$ if the first column is nonempty.}
    \label{fig:213-ASR_bij}
    \end{figure}

    The matrix decomposes into $M \in \asm{i-1}{213}$ and $R^{\prime} \in \asr{r-i-1,r-i+1,r-i-1}{213}$ if row~$i$ contains no other nonzero entry (\cref{subfig:213-ASR_bij_a}). The bijection maps $R$ to $\Phi(R^{\prime})$, which starts at $(0,0)$ and ends at $(r-i,r-i)$; it is then successively followed by a horizontal step, $\Psi(M^{\rev})$, a diagonal step, and finally a vertical step. In the special case $i=1$ when $M$ is empty, we obmit the diagonal step.

    If row~$i$ contains another nonzero entry (\cref{subfig:213-ASR_bij_b}), then it decomposes into $R^{\prime} \in \asr{r-i-1,r-i+1,r-i-1}{213}$ and a matrix~$M$ of size $i \times i$. The matrix~$M$ becomes an extendably $213$-avoiding ASM~$\widehat{M}$ if the $-1$ in its bottom left corner is replaced by $0$. We then map $R$ to $\Phi(R^{\prime})$, which is then followed by a horizontal step, $\Psi(\widehat{M}^{\rev})$, and a vertical step.

    To see that $\Phi$ is indeed bijective, we note two observations. First, the position (if any) of the $1$ in the first column corresponds to the last return of the little Schr\"oder path to the main diagonal. Secondly, since the matrix~$\widehat{M}$ in the case of \cref{subfig:213-ASR_bij_b} does not have a $1$ in its bottom left corner, $\Psi(\widehat{M}^{\rev})$ does not end with a diagonal step.
\end{proof}

\section{Concluding remarks}
\label{sec:conclusion}

We have proved recursive formulas for the number of ASRs extendably avoiding $\pi$ for $\pi=312,213,132,231$ and have conjectured an explicit formula in \cref{conj:312-formula}. A very natural question is if this can be extended to other patterns, which we briefly describe here.

\subsection{\texorpdfstring{Extendably avoiding the pattern~$\mathbf{123}$ or $\mathbf{321}$}{Extendably avoiding the pattern~123 or 321}}

In \cite{JL07}, it was shown that the numbers of $132$-avoiding ASMs---and, by symmetry, of $312$-, $213$-, and $231$-avoiding ASMs---are given by the large Schr\"oder numbers. However, for the remaining patterns of length~$3$, namely $123$ and $321$, no closed formula is known. The first few numbers of $123$-avoiding ASMs (and, by symmetry, of $321$-avoiding ASMs) are
$1,2,6,23,103,514,2785,16132,98897,637192,\dots$.

Similarly, enumerating extendably $123$- or $321$-avoiding ASRs remains an open problem. Obtaining a complete system of recurrence relations would be a step towards a closed form for the corresponding pattern-avoiding ASMs. One possibility is to add more parameters to find recursions for some refinements, 
but this has not been fruitful for us yet. 

\subsection{Extendably avoiding the cross pattern}

There are seven ASMs of size $3 \times 3$: the six permutation matrices and
\[
\chi = \begin{pmatrix} 0 & 1 & 0 \\ 1 & -1 & 1 \\ 0 & 1 & 0 \end{pmatrix},
\]
which we call the \emph{cross pattern}. There is a natural generalization of pattern avoidance with respect to the cross pattern: We say that an ASR is avoiding the cross pattern if there are no $1$s and $-1$s in the same relative position as in $\chi$.

Every ASR that contains a $-1$ also contains $\chi$ in every extension to an ASM. Thus, the number~$\s{\chi}{r}{k}{d}$ of $\chi$-avoiding ASRs of size $r \times k$ with $d$ nonempty rows equals the number of $\{0,1\}$-matrices of size $r \times k$ with exactly $d$ $1$s and at most one $1$ in each row and column. These are enumerated by
\begin{equation*}
    d! \binom{r}{d} \binom{k}{d}.
\end{equation*}

\bibliographystyle{alpha}
\bibliography{sample}

@unpublished{hkl,
AUTHOR = {H\"ongesberg, Hans and Konvalinka, Matja{\v z} and Linusson, Svante},
TITLE = {Pattern avoidance in alternating sign rectangles {II}: {C}lassical avoidance},
NOTE = {In preparation}
}

@book{AAR99,
    AUTHOR = {Andrews, George E. and Askey, Richard and Roy, Ranjan},
     TITLE = {Special functions},
    SERIES = {Encyclopedia of Mathematics and its Applications},
    VOLUME = {71},
 PUBLISHER = {Cambridge University Press, Cambridge},
      YEAR = {1999},
     PAGES = {xvi+664},
       DOI = {10.1017/CBO9781107325937},
}

@article {Bar21,
    AUTHOR = {Barry, Paul},
     TITLE = {Generalized {C}atalan recurrences, {R}iordan arrays, elliptic
              curves, and orthogonal polynomials},
   JOURNAL = {J. Integer Seq.},
  FJOURNAL = {Journal of Integer Sequences},
    VOLUME = {24},
      YEAR = {2021},
    NUMBER = {5},
     PAGES = {Art. 21.5.1, 48}
}

@article {Beh08,
    AUTHOR = {Behrend, Roger E.},
    TITLE = {Osculating paths and oscillating tableaux},
    JOURNAL = {Electron. J. Comb.},
    FJOURNAL = {Electronic Journal of Combinatorics},
    VOLUME = {15},
    YEAR = {2008},
    NUMBER = {1},
    PAGES = {Paper No. 7},
    DOI = {10.37236/731}
}

@article {BMO25,
    FJOURNAL = {Glasgow Mathematical Journal},
    JOURNAL = {Glasg. Math. J.},
    PAGES = {50--66},
    VOLUME = {67},
    NUMBER = {1},
    YEAR = {2025},
    TITLE = "{The combinatorics of $N_\infty$ operads for $C_{qp^n}$ and $D_{p^n}$}",
    AUTHOR = {Balchin, Scott and MacBrough, Ethan and Ormsby, Kyle},
    DOI = {10.1017/S0017089524000211}
}

@article{EL95,
    title={The size of {F}ulton’s essential set},
    volume={2},
    DOI={10.37236/1200},
    number={1},
    fjournal={The Electronic Journal of Combinatorics}, 
    JOURNAL = {Electron. J. Comb.},
    author={Eriksson, Kimmo and Linusson, Svante},
    year={1995},
    pages={R6}
}

@article{Fis06,
	author = {Fischer, Ilse},
	fjournal = {Advances in Applied Mathematics},
	journal = {Adv. Appl. Math.},
	number = {2},
	pages = {249--267},
	title = {The number of monotone triangles with prescribed bottom row},
	volume = {37},
	year = {2006},
	doi = {10.1016/j.aam.2005.03.009}
}

@article {For08,
    AUTHOR = {Fortin, Marc},
    TITLE = {The {M}ac{N}eille completion of the poset of partial injective functions},
    JOURNAL = {Electron. J. Comb.},
    FJOURNAL = {Electronic Journal of Combinatorics},
    VOLUME = {15},
    YEAR = {2008},
    NUMBER = {1},
    PAGES = {Paper No. R62},
    DOI = {10.37236/786}
}

@article {HS22,
    AUTHOR = {Heuer, Dylan and Striker, Jessica},
    TITLE = {Partial permutation and alternating sign matrix polytopes},
   JOURNAL = {SIAM J. Discrete Math.},
    FJOURNAL = {SIAM Journal on Discrete Mathematics},
    VOLUME = {36},
    YEAR = {2022},
    NUMBER = {4},
    PAGES = {2863--2888},
    DOI = {10.1137/21M1417958}
}

@article {JL07,
    AUTHOR = {Johansson, Robert and Linusson, Svante},
     TITLE = {Pattern avoidance in alternating sign matrices},
   JOURNAL = {Ann. Comb.},
  FJOURNAL = {Annals of Combinatorics},
    VOLUME = {11},
      YEAR = {2007},
    NUMBER = {3-4},
     PAGES = {471--480},
      ISSN = {0218-0006,0219-3094},
   MRCLASS = {05A15 (05B20)},
  MRNUMBER = {2376117},
MRREVIEWER = {Eric\ S.\ Egge},
       DOI = {10.1007/s00026-007-0330-1},
       URL = {https://doi.org/10.1007/s00026-007-0330-1},
}

@article {Kre73,
    AUTHOR = {Kreweras, Germain},
     TITLE = {Sur les hiérarchies de segments},
   JOURNAL = {Cahiers du Bureau universitaire de recherche opérationnelle. Série Recherche},
    VOLUME = {20},
     PAGES = {3--61},
      YEAR = {1973}
}

@article{Lin02,
    title = {Extended pattern avoidance},
   author = {Svante Linusson},
  journal = {Discrete Mathematics},
   volume = {246},
   number = {1--3},
    pages = {219--230},
     year = {2002},
      doi = {10.1016/S0012-365X(01)00259-X}
}

@misc{oeis,
    Author = {{OEIS Foundation Inc.}},
    Note = {Published electronically at \url{http://oeis.org}},
    Title = {The {O}n-{L}ine {E}ncyclopedia of {I}nteger {S}equences},
    Year = 2026}

@incollection {PR19,
    AUTHOR = {Pan, Ran and Remmel, Jeffrey B.},
     TITLE = {Paired patterns in lattice paths},
 BOOKTITLE = {Lattice path combinatorics and applications},
    SERIES = {Dev. Math.},
    VOLUME = {58},
     PAGES = {382--418},
 PUBLISHER = {Springer, Cham},
      YEAR = {2019},
       DOI = {10.1007/978-3-030-11102-1}
}

@article {PS98,
    AUTHOR = {Pergola, Elisa and Sulanke, Robert A.},
     TITLE = {Schr\"oder triangles, paths, and parallelogram polyominoes},
   JOURNAL = {J. Integer Seq.},
  FJOURNAL = {Journal of Integer Sequences},
    VOLUME = {1},
      YEAR = {1998},
     PAGES = {Article 98.1.7}
}

@article {Sch25,
    FJOURNAL = {Journal of Integer Sequences},
    JOURNAL = {J. Integer Seq.}, 
    PAGES = {Article 07.7.7},
    VOLUME = {10},
    YEAR = {2007},
    TITLE = {Generalized {S}chr\"oder numbers and the rotation principle},
    AUTHOR = {Schr\"oder, Joachim},
    URL = {https://cs.uwaterloo.ca/journals/JIS/VOL10/Schroder/schroder45.pdf}
}

@book{Knu73,
  author    = {Knuth, Donald E.},
  title     = {The Art of Computer Programming. {V}ol. 1:
               {F}undamental Algorithms},
  edition   = {Second},
  publisher = {Addison-Wesley Publishing Co., Reading, Mass.},
  year      = {1973},
}

@article{Kup96,
	author = {Kuperberg, Greg},
	fjournal = {International Mathematics Research Notices},
	journal = {	Int. Math. Res. Not.},
	number = {3},
	pages = {139--150},
	title = {Another proof of the alternating-sign matrix conjecture},
	volume = {1996},
	year = {1996},
	DOI = {10.1155/S1073792896000128}
}

@article{Zei96,
  author  = {Zeilberger, Doron},
  title   = {Proof of the alternating sign matrix conjecture},
  journal = {Electron. J. Comb.},
  volume  = {3},
  year    = {1996},
  number  = {2},
  pages   = {\#R13},
}

@article {BK26,
    AUTHOR = {Billey, Sara C. and Konvalinka, Matja\v{z}},
     TITLE = {Generalized rank functions and quilts of alternating sign
              matrices},
   JOURNAL = {Adv. Math.},
  FJOURNAL = {Advances in Mathematics},
    VOLUME = {491},
      YEAR = {2026},
     PAGES = {Paper No. 110863},
       DOI = {10.1016/j.aim.2026.110863},
       URL = {https://doi.org/10.1016/j.aim.2026.110863},
}

@article{BESST24,
	fjournal = {Annals of Combinatorics},
	journal = {Ann. Comb.},
	pages = {195--225},
	volume = {30},
	year = {2026},
	title = {Enumeration of Pattern-Avoiding Alternating Sign Matrices: An Asymptotic Dichotomy},
	author = {Bouvel, Mathilde and Egge, Eric S. and Smith, Rebecca N. and Striker, Jessica and Troyka, Justin M.},
	DOI = {10.1007/s00026-025-00784-9}
}

@book{Bre99,
  author    = {Bressoud, David M.},
  title     = {Proofs and Confirmations: The Story of the Alternating
               Sign Matrix Conjecture},
  series    = {MAA Spectrum},
  publisher = {Cambridge University Press, Cambridge; Mathematical
               Association of America, Washington, DC},
  year      = {1999},
  pages     = {xviii+274},
}

\end{document}